\documentclass[11pt]{amsart} 
\usepackage{amsmath}
\usepackage{amsfonts}
\usepackage{amssymb}
\usepackage{amsthm}
\usepackage{amsthm}
\usepackage{graphics}
\usepackage{color}
\usepackage{algorithm}
\usepackage{algorithmic}

\input xy
\xyoption{all}

\newtheorem{theorem}{Theorem}[section]
\newtheorem{lemma}[theorem]{Lemma}

\newtheorem{corollary}[theorem]{Corollary}
\newtheorem{conjecture}[theorem]{Conjecture}

\theoremstyle{definition}

\theoremstyle{remark}

\numberwithin{equation}{theorem}

\newcommand{\qform}[1]{{\left\langle{#1}\right\rangle}}

\newcommand{\inverselimit}[1]{\displaystyle \lim_{\leftarrow}}

 \newcommand{\cF}{{\mathcal F}}

\newcommand{\cL}{{\mathcal L}}

\DeclareMathOperator{\id}{id}

\DeclareMathOperator{\ind}{ind}

\DeclareMathOperator{\End}{End}

\newcommand{\sq}[1]{#1^\times/#1^{\times 2}}

\DeclareMathOperator{\Ad}{Ad}


\DeclareMathOperator{\ad}{ad}


\begin{document}

\title{Involutions, odd-degree extensions and generic splitting}

\author{Jodi Black and Anne Qu\'eguiner-Mathieu}

\address{Department of Mathematics, Bucknell University, 380 Olin Science Building,  Lewisburg, Pennsylvania, 17837}

\email{jodi.black@bucknell.edu}

\address{Universit\'e Paris 13, Sorbonne Paris Cit\'e, LAGA, CNRS (UMR 7539), 99 avenue
 Jean-Baptiste Cl\'ement, F-93430
 Villetaneuse, France}

\email{queguin@math.univ-paris13.fr}

\thanks{Thanks to R. Parimala  for many fruitful discussions on the subject and for her support while were preparing this work. Thanks also to Eva Bayer, Skip Garibaldi, Nikita Karpenko and Jean-Pierre Tignol for helpful comments on an earlier draft of this work. The second-named author acknowledges the support of the French Agence Nationale de la Recherche (ANR) under reference ANR-12-BL01-0005}


\begin{abstract}
Let $q$ be a quadratic form over a field $k$ and let $L$ be a field extension of $k$ of odd degree. It is a classical result that if $q_L$ is isotropic (resp. hyperbolic) then $q$ is isotropic (resp. hyperbolic). In turn, given two quadratic forms $q, q^\prime$ over $k$, if $q_L \cong q^\prime_L$ then $q \cong q^\prime$. It is natural to ask whether similar results hold for algebras with involution. We give a survey of the progress on these three questions with particular attention to the relevance of hyperbolicity, isotropy and isomorphism over some {appropriate} function field. Incidentally, we prove the anisotropy property in some {new} low degree cases.
 \end{abstract}

\maketitle

\section*{Introduction}
\label{intro.sec}

Let $F$ be a field of characteristic different from 2. It is well-known that an anisotropic quadratic form $q$ over $F$ is anisotropic over any finite field extension of $F$ of odd degree. This result was first published by T.A. Springer \cite{Springer} in 1952, but Emil Artin had already communicated a proof to Witt by 1937~{see~\cite[Remark 1.5.3]{Kahn}}. In what follows, we refer to this result as \emph{the Artin-Springer theorem}. Since any quadratic form can be decomposed as the sum of an anisotropic part and some number of hyperbolic planes, an immediate consequence of the Artin-Springer theorem is that a quadratic form which becomes hyperbolic over an odd-degree field extension is hyperbolic. Further, since two quadratic forms $q$ and $q^\prime$ are isomorphic if and only if $q \perp-q^\prime$ is hyperbolic, another consequence of the Artin-Springer theorem is that two quadratic forms which become isomorphic over an odd-degree field extension are isomorphic. This last result also extends to similar quadratic forms. Indeed, using the properties of Scharlau transfer's map described in~\cite[Chap. 2, Thm. 5.6, Lem. 5.8]{Scharlau}, one may check that two forms which become similar after an odd degree field extension are similar.

Recall that every (nondegenerate) quadratic form $q$ on an $F$-vector space $V$ induces the so-called adjoint involution $\ad_q$ on the endomorphism algebra $\End_F(V)$, and, conversely, every orthogonal involution on $\End_F(V)$ is adjoint to a quadratic form $q$, uniquely defined up to a scalar factor. Therefore, algebras with orthogonal involution can be thought of as twisted forms (in the sense of Galois cohomology) of quadratic forms up to scalars. {Since $\ad_q$ is isotropic (resp. hyperbolic) if and only if $q$ is isotropic (resp. hyperbolic) and $\ad_q$ is isomorphic to $\ad_{q^\prime}$ if and only if $q$ and $q^\prime$ are similar,} it is natural to ask whether the behavior of quadratic forms under odd-degree field extensions described above, extends to involutions on central simple algebras. The present paper is mostly a survey of what is known on this topic. More precisely, we are interested in the following:

\begin{description}
\item[Main questions]
Let $F$ be a field and let $(A,\sigma)$ be an algebra with involution over $F$. Let $L$ be an odd-degree field extension of $F$.

\begin{enumerate}
\item[(i)] If $\sigma$ is anisotropic, does it remain anisotropic over $L$? 
\item[(ii)] If $\sigma$ is non-hyperbolic, does it remain non-hyperbolic over $L$? 
\item[(iii)] If $\sigma$ and $\sigma^\prime$ are non-isomorphic involutions, do they remain non-isomorphic over $L$? 
\end{enumerate}
\end{description}

Question (ii) was solved by Bayer and Lenstra~\cite{BayerLenstra}, in an even more general context than is discussed above, see~\S~\ref{bl.sec} below. Question (i) should be posed differently, as was noticed by {Parimala, Sridharan and Suresh. In~\cite[\S 4]{PSS}, they constructed an example of an anisotropic unitary involution that becomes isotropic over an odd-degree field extension. They suggested the following reformulation:
\begin{enumerate}
\item[(i$'$)]
Let $(A,\sigma)$ be an algebra with involution over $F$, and let $L/F$ be a field extension of degree coprime to $2\ind(A)$. 
If $\sigma$ is anisotropic, does it remain anisotropic over $L$\footnote{More generally, one can ask how the Tits index of an algebraic group behaves over finite field extensions of degree coprime to the torsion primes of the group. See~\cite[Problem 7.3]{ABGV}}? 
\end{enumerate} 
Questions (i)  and (i$'$) are equivalent if the involution is orthogonal or symplectic, {since an algebra which admits an involution of either of these types has exponent 2 and the index and exponent of any central simple algebra have the same prime factors. By similar reasoning, the two questions} are equivalent in the unitary case under the additional hypothesis that the algebra has $2$-power exponent.} 

Question  (i$'$) is open in general, {though as we will discuss in ~\S~\ref{hypiso.sec},~\ref{pss.sec},~\ref{generic.sec} and~\ref{lowdeg.sec} a positive answer is known for algebras with involution satisfying some additional conditions.} By the {aforementioned} Bayer-Lenstra theorem, question  (i$'$) has a positive answer for involutions for which isotropy and hyperbolicity are equivalent. In particular, a positive answer is known for totally decomposable involutions, by results of Becher~\cite{Becher} and Karpenko~\cite{Karphyporth}; this is explained in \S~\ref{hypiso.sec}. Parimala, Sridharan and Suresh gave a general argument for algebras of index $2$ with orthogonal involution, based on the excellence property of the function field of a conic~\cite{PSS}, see~\S~\ref{pss.sec}. In~\S~\ref{lowdeg.sec} we prove new results on low-degree algebras, in particular, degree $12$ algebras with orthogonal involutions, thus answering a question posed in~\cite[pg 240]{ABGV}.
This new case includes some algebras of index strictly larger than $2$, and for which isotropy is not equivalent to hyperbolicity, so that the question does not reduce to the Bayer-Lenstra theorem. 

A natural way to address question  (i$'$) is to try to reduce to quadratic form theory by extending scalars to a function field\footnote{This approach also relates our main question to the following classical question for algebraic groups: Let $F$ be a field, $G$ and $G'$ be algebraic groups over $F$, and $X$ and $X'$ projective homogeneous varieties under $G$ and $G'$, respectively. When does $X$ admit a rational point over the function field $F(X^\prime)$? See for instance~\cite{Kahn} for results for quadratic forms and the so-called index reduction formulas (e.g. \cite{MPW}) for results for central simple algebras.}. {This method was used more than a decade ago by Parimala-Sridharan-Suresh~\cite{PSS}, Dejaiffe~\cite{Dejaiffe} and Karpenko~\cite{Karpanisotropy} to study isotropy of orthogonal involutions.}
{Roughly speaking, one uses the existence of generic index reduction fields $\cF_{A,t}$ depending { on the algebra $A$ and }on the type of the involution, over which $\sigma$ is adjoint to a hermitian form, which in turn is determined by an associated quadratic form.  In the orthogonal case, one may take a generic splitting field of the algebra $A$, since the involution is adjoint to a quadratic form over such a field; see \S~\ref{functionfield.notation} below for a description of $\cF_{A,t}$ in the symplectic and unitary cases. {If one can prove that an anisotropic involution of type $t$ remains anisotropic over $\cF_{A,t}$, then a positive answer to question (i$'$) (and even question (i)) follows easily from the Artin-Springer theorem} (see Lemma~\ref{as.lem}). On the other hand, it is a deep result, due to Karpenko~\cite{Karpisoorth}, Tignol~\cite[Apppendix]{Karpisoorth} and Karpenko-Zhykhovich~\cite{KZ} that if anisotropy is preserved under odd-degree field extensions, then it is preserved under extension to $\cF_{A,t}$. Therefore, question (i) is equivalent to asking whether anisotropy is preserved over $\cF_{A,t}$ (for algebras of $2$-power index in the unitary case). 
A survey on this approach is the content of~\S~\ref{generic.sec}, where we also explain how one can reduce question (i) to an excellence question. }

An affirmative answer to question (iii) for symplectic and orthogonal involutions was given by Lewis~\cite[Proposition 10]{Lewis} and Barquero-Salavert~\cite[Theorem 3.2]{Barquero} proved an affirmative answer for unitary involutions (See also \cite[Proposition 5.1]{Black1}). The second-named author and Tignol~\cite[\S 4]{QT}, produced examples of non-isomorphic orthogonal involutions that become isomorphic after generic splitting of the underlying algebra. In particular, the behavior of non-isomorphic involutions is not the same under finite odd-degree extensions and extension to $\cF_{A,t}$, see~\S~\ref{isom.sec}.

\section{Background and Notation}
\label{background.sec}

In this section, we review the relevant background that informs this work. The results on quadratic forms mentioned in the introduction are explained in~\cite{EKM},~\cite{Kahn},~\cite{Lam}, and~\cite{Scharlau}, while general facts on algebras with involution and hermitian forms are in~\cite{KMRT}. 

{Throughout the paper, $A$ denotes a central simple algebra over a field $K$ of {characteristic}\footnote{{This restriction on the characteristic of $K$ is not always necessary. For instance, the Artin-Springer theorem is valid in characteristic $2$~\cite[18.5]{EKM}. The main result in~\cite{Karpisopair}, which will be discussed in \S~\ref{generic.sec} below, holds over a field of arbitrary characteristic. However, as most of the results which inform this survey are for fields of characteristic different from $2$, we observe that convention.}}} different from 2. An \emph{involution} $\sigma$ on $A$ is an anti-automorphism of period 2. The involution is said to be of \emph{orthogonal type, symplectic type} or  \emph{unitary type}, according to the type of its automorphism group. We consider as a base field the field $F$ of elements of $K$ fixed by $\sigma$. If $\sigma$ is unitary, $K/F$ is a quadratic field extension. Otherwise, $K=F$ and $\sigma$ is $K$-linear. In all three cases, we say for short that $(A,\sigma)$ is \emph{an algebra with involution over $F$}. 
{Two $F$-algebras with involution $(A,\sigma)$ and $(A^\prime,\sigma^\prime)$ are \emph{isomorphic} if there is an $F$-algebra isomorphism $f:A \to A^\prime$ such that $f \circ \sigma=\sigma^\prime \circ f$. Since in the unitary case, the isomorphism $f$ induces an isomorphism of the centers of the algebras $K$ and $K'$, we may assume that $K=K'$ and that $f$ is $K$-linear.}

By Wedderburn's theorem, the algebra $A$ can always be represented as an endomorphism algebra $A\simeq \End_D(V)$, where $D$ is a central division algebra Brauer equivalent to $A$, $V$ is a $D$-module, and both are uniquely defined up to isomorphism. The degree of $D$ is called the \emph{index} of $A$, and we call the dimension  of $V$ over $D$, the {\em co-index} of $A$. Thus the degree of $A$ is the product of its index and its co-index. It follows from the existence criteria for involutions~\cite[\S 3]{KMRT} that $D$ is endowed with an involution $\theta$ of the same type as $\sigma$. Once such a $\theta$ is chosen, $\sigma$ can be represented as the adjoint involution with respect to a hermitian form $h$ over $(D,\theta)$, which is uniquely defined up to a scalar factor. We will refer to such a form $h$ as {\em a hermitian form associated to} $\sigma$. 
{For any field extension $L/F$, we denote by $(A_L,\sigma_L)$ the \emph{extended algebra with involution}, defined by $A_L=A\otimes_F L$ and $\sigma_L=\sigma\otimes \id$.  Since an involution of any type acts on $F$ as $\id_F$, $\sigma_L$ is well-defined. Given a representation $(A,\sigma)=(\End_D(V),\ad_h)$, for some hermitian module $(V, h)$ over $(D,\theta)$ we denote by $V_L$ the $D_L$-module $V_L=V\otimes_F L$ and by $h_L$ the extended form $h_L:\, V_L\times V_L\rightarrow (D_L,\theta_L)$, so that $(A_L,\sigma_L)\simeq (\End_{D_L}(V_L),\ad_{h_L})$. }

{One may easily check that if some hermitian form associated with $\sigma$ is isotropic 
(resp. hyperbolic), so is every hermitian form associated with $\sigma$.} The involution is said to be \emph{isotropic} or \emph{hyperbolic} accordingly. A right ideal $I \subset A$ is called an \emph{isotropic ideal} if $\sigma(x)x=0$ for all $x\in I$. Given a representation $(A,\sigma)\simeq(\End_D(V),\ad_h)$, isotropic right ideals are given by endomorphisms of $V$ with image contained in a given totally isotropic $D$-subspace $W$ of $V$. The {\em reduced dimension} of such an ideal is the product of the index of $A$ and the dimension of $W$ over $D$. 
Hence, one may also use isotropic ideals to give a statement of the definition of isotropy and hyperbolicity of involutions, independent of the choice of a representation in terms of a hermitian module. Namely, $(A,\sigma)$ is \emph{isotropic} if and only if $A$ contains a nonzero isotropic ideal and $(A,\sigma)$ is \emph{hyperbolic} if and only if $A$ contains an isotropic ideal of reduced dimension $\frac 12 \deg(A)$ (see~\cite{BST} or~\cite[\S6]{KMRT}). In particular, hyperbolic involutions can only exist on algebras of even co-index. 

{{Given an algebra $A$ with an involution $\sigma$ of any type $t$, we define below a field $\cF_{A,t}$\label{functionfield.notation} such that the involution $\sigma_{\cF_{A,t}}$ is either adjoint to a quadratic form or adjoint to a hermitian form determined by a quadratic form. In particular, even in the case of a unitary or symplectic involution, there is a quadratic form $q$ over $\cF_{A,t}$ such that  $\sigma_{\cF_{A,t}}$ is isotropic (resp. hyperbolic) if and only if $q$ is isotropic (resp. hyperbolic).} The field $\cF_{A,t}$ depends on the algebra $A$ and on the type $t$ of $\sigma$ and is defined as follows. We set $t=o$ (respectively $s, u$) when $\sigma$ is of orthogonal (respectively symplectic, unitary) type.} Assume the involution $\sigma$ is of orthogonal type. We let $\cF_{A,o}$ be the function field of the Severi-Brauer variety ${\mathrm {SB}}(A)$ of $A$. Since $A$ is split over $\cF_{A,o}$ the involution $\sigma_{\cF_{A,o}}$ is adjoint to a quadratic form. 
If $\sigma$ is unitary, we need an extension of the fixed field $F=K^\sigma$ to extend the involution. Thus, we consider the function field $\cF_{A,u}$ of the Weil transfer of the Severi-Brauer variety of $A$. Since $A$ is split over $\cF_{A,u}$ the extended involution $\sigma_{\cF_{A,u}}$ is adjoint to a hermitian form $h$ with values in the quadratic extension $\cF_{A,u}\otimes_F K$. We can associate to $h$ the quadratic form $q_h:\,V\rightarrow \cF_{A,u}$ defined by $q_h(x):=h(x,x)$. It is classically known that the hermitian form $h$ is uniquely determined by $q_h$ and $q_h$ is called the \emph{trace form} of $h$. Moreover, the isotropy or hyperbolicity of $h$ is determined by that of $q_h$~\cite[Chap. 10, Thm. 1.1]{Scharlau}. A symplectic involution on a split algebra is hyperbolic. So, rather than considering a splitting field of $A$, we let $\cF_{A,s}$ be the function field  of the generalized Severi-Brauer variety ${\mathrm {SB}}_2(A)$ of right ideals of reduced dimension $2$. This field generically reduces the index of $A$ to 2. Further, given $H$ a quaternion algebra over $\cF_{A,s}$ Brauer equivalent to $A_ {\cF_{A,s}}$ and taking the canonical involution $\bar{~}$ on $H$, the involution $\sigma_{\cF_{A,s}}$ is adjoint to a hermitian form over $(H,\bar{~})$ determined as in the unitary case by its trace form~\cite[Chap. 10, Thm. 1.7]{Scharlau}. 

We will make frequent use of the following straightforward consequence of the Artin-Springer theorem:  
\begin{lemma}
\label{as.lem}
Let $(A,\sigma)$ be an algebra with involution of type $t$ over $F$. If $\sigma$ is non-hyperbolic (resp. anisotropic) over the function field $\cF_{A,t}$, {then it is non-hyperbolic (resp. anisotropic)} over any odd degree extension $L$ of the base field $F$. 
\end{lemma}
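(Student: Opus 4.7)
The plan is to prove the contrapositive: assuming $\sigma_L$ is hyperbolic (resp. isotropic) for some odd-degree extension $L/F$, I will deduce that $\sigma_{\cF_{A,t}}$ is hyperbolic (resp. isotropic). The key device is the compositum $M:=L\cdot\cF_{A,t}$, which I will identify with $\cF_{A_L,t}$; this makes $M$ simultaneously an extension of $L$ (over which the involution is already degenerate) and an extension of $\cF_{A,t}$ of odd degree.

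First I would verify that $[M:\cF_{A,t}]=[L:F]$. This amounts to checking that base change from $F$ to the finite extension $L$ preserves the geometric integrity of the underlying variety (the Severi--Brauer variety in the orthogonal case, $\mathrm{SB}_2(A)$ in the symplectic case, and the Weil transfer of the Severi--Brauer variety in the unitary case). Granting this, the function field of the base-changed variety is $L\cdot\cF_{A,t}=\cF_{A_L,t}$, and since $\cF_{A,t}$ is a regular extension of $F$, its degree over $\cF_{A,t}$ equals $[L:F]$.

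Next, extending scalars from $L$ to $M$ preserves hyperbolicity (resp. isotropy), so $\sigma_M$ is hyperbolic (resp. isotropic). By the setup described on page~\pageref{functionfield.notation}, over $\cF_{A,t}$ the involution $\sigma_{\cF_{A,t}}$ is adjoint to a hermitian form $h$ which either is itself a quadratic form $q$ (orthogonal case) or is determined up to isomorphism by its trace form $q=q_h$ (symplectic and unitary cases). In each case the isotropy and hyperbolicity of $h$ coincide with those of $q$, and this correspondence commutes with scalar extension. Therefore, hyperbolicity (resp. isotropy) of $\sigma_M$ forces $q_M$ to be hyperbolic (resp. isotropic).

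Finally, since $[M:\cF_{A,t}]=[L:F]$ is odd, the Artin--Springer theorem applied to $q$ over $\cF_{A,t}$ yields that $q$, and hence $\sigma_{\cF_{A,t}}$, is hyperbolic (resp. isotropic), contradicting the hypothesis. The only subtlety in the argument is the identification $M\simeq\cF_{A_L,t}$ and the computation of $[M:\cF_{A,t}]$, which rests on the geometric integrity of the relevant varieties; beyond that the proof is a direct combination of the Artin--Springer theorem with the description of $\cF_{A,t}$.
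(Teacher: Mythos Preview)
Your proof is correct and follows essentially the same route as the paper's: form the compositum $\cL=L\cdot\cF_{A,t}$, observe that $\sigma_\cL$ is hyperbolic (resp.\ isotropic) and that $[\cL:\cF_{A,t}]$ is odd, and then apply Artin--Springer to the quadratic form governing $\sigma_{\cF_{A,t}}$. You are simply more explicit than the paper in justifying that $[\cL:\cF_{A,t}]=[L:F]$ via geometric integrity of the relevant varieties, whereas the paper asserts this without comment.
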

\begin{proof}
Assume $\sigma$ is hyperbolic (resp. isotropic) over an odd degree extension $L$ of the base field $F$. Then $\sigma$ is hyperbolic (resp. isotropic) over the compositum $\cL=\cF_{A,t}L$, which is an odd degree extension of $\cF_{A,t}$. Since the involution $\sigma$ is determined by a quadratic form over $\cF_{A,t}$ and over $\cL$, we can apply the Artin-Springer theorem to deduce that $\sigma_{\cF_{A,t}}$ is hyperbolic (resp. isotropic). 
\end{proof}

\section{Hyperbolicity of involutions}
\label{bl.sec}
\noindent
We begin by considering question (ii) above. As we mentioned, a complete answer to this question was given in 1990 by Bayer and Lenstra \cite{BayerLenstra}, who aimed at proving the existence of a self-dual normal basis for any odd-degree Galois field extension. Their argument is based on the following, which is the result we are interested in: 
\begin{theorem}\cite[Proposition 1.2]{BayerLenstra} 
\label{BL}
Let $B$ be a finite dimensional $F$-algebra endowed with an $F$-linear involution $\theta$, and $(V,h)$ a hermitian module over $(B,\theta)$. Let $L$ be a field extension of $F$ of odd degree. If $(V_L,h_L)$ is hyperbolic, then $(V,h)$ is hyperbolic.
\end{theorem}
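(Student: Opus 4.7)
The plan is to reduce the theorem to the Artin--Springer theorem by constructing a Scharlau-type transfer from $L$ to $F$, chosen so that $(V,h)$ appears as an orthogonal direct summand of a hyperbolic hermitian form over $(B,\theta)$, the complementary summand being itself hyperbolic essentially for free thanks to $[L:F]$ being odd.

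Iterating over a tower of simple intermediate extensions, I reduce to the case $L=F(\alpha)$ with $d=[L:F]$ odd. Let $s\colon L\to F$ be the $F$-linear form dual to $1$ in the basis $\{1,\alpha,\ldots,\alpha^{d-1}\}$, so that $s(1)=1$ and $s(\alpha^i)=0$ for $1\le i\le d-1$; set $L_0=\ker s=\operatorname{span}_F(\alpha,\ldots,\alpha^{d-1})$, so that $L=F\oplus L_0$ and, correspondingly, $V_L=V\oplus V_0$ as $B$-modules, with $V_0:=V\otimes_F L_0$. Define the transferred form $h^s:=(\id_B\otimes s)\circ h_L$ on $V_L$ regarded as a $(B,\theta)$-hermitian module. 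A direct computation gives the orthogonal decomposition $(V_L,h^s)\simeq(V,h)\perp(V_0,\,h\otimes_F b_s|_{L_0})$, where $b_s(\lambda,\mu):=s(\lambda\mu)$ is the trace-like $F$-bilinear form on $L$.

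The key observation is that $b_s|_{L_0}$ is a non-degenerate hyperbolic $F$-bilinear form of even dimension $d-1$: the subspace $\operatorname{span}_F(\alpha,\ldots,\alpha^{(d-1)/2})\subset L_0$ is totally isotropic of half the $F$-dimension, since $s(\alpha^{i+j})=0$ whenever $1\le i,j\le (d-1)/2$ (so that $i+j\le d-1$), and non-degeneracy follows from $s(\lambda\cdot\lambda^{-1})=1$ for any $\lambda\in L^\times$. Consequently the second summand, being a tensor product of $h$ with a hyperbolic bilinear form, is hyperbolic over $(B,\theta)$. On the other hand, any $B_L$-Lagrangian $W\subset V_L$ of $h_L$ is automatically totally isotropic for $h^s=s\circ h_L$ and remains a $B$-direct summand of $V_L$ of $B$-rank $\tfrac12\dim_B V_L$, so $(V_L,h^s)$ is hyperbolic as well.

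Combining these two facts in the above orthogonal decomposition, Witt cancellation for hermitian forms over $(B,\theta)$ yields that $(V,h)$ itself is hyperbolic. The main obstacle I anticipate is the invocation of Witt cancellation in the final step: it is classical when $(B,\theta)$ is semisimple, but in the generality of the statement one likely needs first to reduce to this situation, for instance by verifying that the Jacobson radical of $B$ is $\theta$-stable and descending to the semisimple quotient $B/\operatorname{rad}(B)$.
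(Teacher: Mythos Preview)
Your approach---extending Scharlau's transfer to hermitian forms over $(B,\theta)$ via an $F$-linear form $s$ with $s(1)=1$, so that the transferred form decomposes as $(V,h)\perp(\text{hyperbolic})$---is exactly the method the paper indicates and that Bayer--Lenstra use. Your flagged concern about Witt cancellation in this generality is the correct technical caveat; it is handled in the literature (Quebbemann--Scharlau--Schulte), and your suggested passage to the semisimple quotient $B/\mathrm{rad}(B)$ is along the right lines.
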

\noindent
In view of the definition of hyperbolic involutions via the associated hermitian forms, this result gives a positive solution to question (ii). Since a $K$-central division algebra with $K/F$ unitary involution is an $F$-algebra with $F$-linear involution, Theorem~\ref{BL} applies to involutions of any type. Further, since there is no simplicity assumption on the algebra $B$, Theorem~\ref{BL} applies to a broader class of algebras with involution than is specified in the statement of question (ii). The proof is quite similar to the classical proof in quadratic form theory and is based on Scharlau's transfer homomorphism, which, as the authors prove, extends naturally to the setting of hermitian forms.

In the sequel, we will frequently use the following corollary of Bayer-Lenstra's theorem: 
\begin{corollary}
\label{bl.cor}
\cite[Corollary 1.4]{BayerLenstra}
Let $(V,h)$ and $(V^\prime,h^\prime)$ be two hermitian forms over $(B,\theta)$, and let $L$ be an extension of $F$ of odd degree. If the extended forms $(V_L,h_L)$ and $(V^\prime_L, h^\prime_L)$ are isomorphic, then $(V,h)$ and $(V^\prime,h^\prime)$ are isomorphic. 
\end{corollary}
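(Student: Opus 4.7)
The plan is to reduce Corollary~\ref{bl.cor} to Theorem~\ref{BL} via the classical observation that two nondegenerate hermitian forms of the same rank are isomorphic precisely when their orthogonal difference is hyperbolic. I would therefore form the hermitian module $(V\oplus V',\, h\perp(-h'))$ over $(B,\theta)$ and examine its hyperbolicity behavior under extension.

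Concretely, an isometry $\phi:(V_L,h_L)\xrightarrow{\sim}(V'_L,h'_L)$ produces the graph
\[
\Gamma_\phi=\{(v,\phi(v))\mid v\in V_L\}\subset V_L\oplus V'_L,
\]
which is a totally isotropic $B_L$-submodule for the form $h_L\perp(-h'_L)$, since $h_L(v,w)-h'_L(\phi(v),\phi(w))=0$ for all $v,w\in V_L$. Because $V$ and $V'$ have the same rank (e.g.\ by extending to $L$), the rank of $\Gamma_\phi$ is exactly half that of $V_L\oplus V'_L$, so $(V_L\oplus V'_L,\, h_L\perp(-h'_L))$ is hyperbolic. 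Applying Theorem~\ref{BL} to the hermitian module $(V\oplus V',\, h\perp(-h'))$ over $(B,\theta)$, one concludes that it is already hyperbolic over~$F$.

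The remaining step, which I expect to be the main obstacle, is to deduce $(V,h)\cong(V',h')$ from hyperbolicity of $(V\oplus V',\, h\perp(-h'))$. This amounts to a Witt-type cancellation statement for nondegenerate hermitian forms over $(B,\theta)$: decomposing each of $h$ and $h'$ into an anisotropic part and a hyperbolic part, the hyperbolicity of $h\perp(-h')$ forces the anisotropic parts to coincide, and equality of ranks then forces the full forms to be isomorphic. The subtlety is that Bayer and Lenstra only require $B$ to be a finite-dimensional $F$-algebra endowed with an $F$-linear involution, so one cannot directly cite the usual Witt theory for hermitian forms over division algebras or central simple algebras; instead, one would either establish the required Witt decomposition in this broader setting (via a suitable reduction to semisimple quotients of $B$), or work directly with a Lagrangian $U\subset V\oplus V'$ and argue, by rank-counting and a descent argument from $L$ using faithful flatness, that the projections $U\to V$ and $U\to V'$ can be used to transfer $h$ into an isometry with $h'$.
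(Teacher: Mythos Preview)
The paper does not actually prove this statement; it merely records it as a corollary of Theorem~\ref{BL} and cites the original source~\cite[Corollary~1.4]{BayerLenstra}. Your reduction---form $h\perp(-h')$, show it becomes hyperbolic over $L$ via the graph of the given isometry, and then invoke Theorem~\ref{BL}---is precisely the intended (and standard) argument, and this part of your proposal is correct and complete.

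The one genuine issue you correctly identify is the cancellation step: from hyperbolicity of $h\perp(-h')$ over $F$ to $h\simeq h'$. This does require Witt cancellation for nondegenerate hermitian forms over a finite-dimensional $F$-algebra with involution, and that result is available in this generality; it is part of the hermitian-category formalism of Quebbemann--Scharlau--Schulte on which~\cite{BayerLenstra} relies (see also~\cite[Chap.~7, \S9]{Scharlau}), and more concretely it follows because such an algebra is semilocal, so the usual cancellation theorems apply. So your first route closes cleanly once you cite the appropriate cancellation statement.

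By contrast, your fallback suggestion---take a Lagrangian $U\subset V\oplus V'$ over $F$ and try to extract an isometry from its projections, or descend something by faithful flatness of $L/F$---does not work as stated. The projections of an arbitrary Lagrangian onto the two summands need not be isomorphisms (think of the case $h=h'$ hyperbolic, where $U$ could be $V\oplus 0$), and faithful-flatness descent from $L$ only recovers the hypothesis that the forms are isomorphic over $L$, which is where you started. Stick with the first argument and invoke cancellation.
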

Note that this result does not answer the isomorphism question for involutions, since $\sigma \simeq \sigma^\prime$ implies only that their associated hermitian forms are similar. However, as we describe in \S~\ref{isom.sec}, one can use Scharlau's norm principle to deduce a positive answer to the isomorphism question. 

{Though Theorem~\ref{BL} gives a very nice and purely algebraic solution to question (ii), it is natural} to ask whether non-hyperbolicity is preserved under scalar extension to the function field $\cF_{A, t}$ (see \S~\ref{background.sec} for the definition of $\cF_{A,t}$, depending on the type $t$ of $\sigma$). The following result is due to Karpenko for orthogonal and unitary involutions, and Tignol for symplectic involutions\footnote{Tignol's argument also applies to the unitary case if the underlying  algebra has exponent $2$.}; it was previously proven by Dejaiffe~\cite{Dejaiffe} and Parimala, Sridharan and Suresh~\cite{PSS} for algebras of index $2$ with orthogonal involutions: 
\begin{theorem}\cite[Theorems 1.1 \& A.1]{Karphyporth}\cite[Theorem 1.1]{Karphypuni}
\label{Kh} 
Let $(A,\sigma)$ be an algebra with involution of type $t$ over $F$. If the extended involution $\sigma_{\cF_{A,t}}$ is hyperbolic, then $\sigma$ is hyperbolic.
\end{theorem}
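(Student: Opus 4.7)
My plan is to convert hyperbolicity of $\sigma$ into the existence of a rational point on a projective homogeneous variety and then use the theory of Chow motives to descend rational points from $\cF_{A,t}$ down to $F$. Let $Y=Y(A,\sigma)$ denote the projective variety parametrizing isotropic right ideals of reduced dimension $\tfrac12\deg(A)$ in $A$; as recalled in Section~\ref{background.sec}, $\sigma$ is hyperbolic over an extension $L/F$ if and only if $Y(L)\neq\emptyset$. The hypothesis thus reads $Y(\cF_{A,t})\neq\emptyset$, while the conclusion asks for $Y(F)\neq\emptyset$. Since $\cF_{A,t}=F(X)$ is the function field of a projective homogeneous variety $X$ depending only on $A$ (namely $\SB(A)$, $\SB_2(A)$, or the Weil transfer of $\SB(A)$ when $t$ equals $o$, $s$, or $u$, respectively), the hypothesis supplies a rational map $X\dashrightarrow Y$, and the question becomes whether such a map forces a rational point on $Y$.

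I would first treat the orthogonal case, which carries the conceptual core of the statement. Both $X=\SB(A)$ and $Y$ are projective homogeneous under $\mathrm{PGO}(A,\sigma)$, so the rational map $X\dashrightarrow Y$ yields a correspondence in $\mathrm{CH}(X\times Y)$. Working with $\mathbb{F}_2$-coefficients, one invokes Karpenko's classification of upper motives of projective homogeneous varieties to identify the upper motive of $X$ with a direct summand of the motive of $Y$. Karpenko's incompressibility theorem for Severi--Brauer varieties then ensures that this summand is non-trivial whenever $A$ is non-split, and a careful analysis of the resulting motivic decomposition ultimately forces the existence of an $F$-rational point on $Y$, giving hyperbolicity over $F$.

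For the unitary case I would run the analogous motivic argument with the Weil transfer of $\SB(A)$ playing the role of $\SB(A)$, using the trace form correspondence from Section~\ref{background.sec} to translate residual hermitian questions into quadratic-form questions over $\cF_{A,u}$. For the symplectic case, handled in Tignol's appendix, I would instead take a more hands-on route: over $\cF_{A,s}$ the algebra $A$ becomes Brauer equivalent to a quaternion algebra, so $\sigma_{\cF_{A,s}}$ is adjoint to a hermitian form whose trace form is a quadratic form; one then combines Theorem~\ref{BL} with excellence-type results for quadratic forms over the function field $F(\SB_2(A))$ to push hyperbolicity back to $F$.

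The main obstacle is the motivic step in the orthogonal and unitary cases: producing the correspondence on $X\times Y$ that identifies the upper motive of $X$ with a summand of the motive of $Y$, and then extracting an $F$-rational point on $Y$ from this identification. The incompressibility results of Karpenko for generalized Severi--Brauer varieties and their Weil transfers are the deep inputs that make this step possible; without them, there is no a priori reason for the rational map $X\dashrightarrow Y$ to lift to a rational point on $Y$ over $F$.
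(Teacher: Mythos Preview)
The paper does not actually prove Theorem~\ref{Kh}; it is quoted from~\cite{Karphyporth} and~\cite{Karphypuni}, and the only remark the paper makes about the argument is that it ``requires much more machinery than the original proof of Bayer and Lenstra, including for instance the Steenrod operations on Chow groups with coefficients in ${\mathbb Z}/2$.'' So the comparison has to be with Karpenko's and Tignol's original arguments as summarized there.

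Your orthogonal and unitary sketches are in the right neighbourhood (motivic methods, $\mathbb{F}_2$-coefficients, the varieties $X$ and $Y$), but two points deserve correction. First, $\SB(A)$ is \emph{not} projective homogeneous under $\mathrm{PGO}(A,\sigma)$: over a splitting field the orthogonal group does not act transitively on lines in the underlying space, since it preserves the quadric. The varieties $X$ and $Y$ are homogeneous under different groups, and the interplay between them is more delicate than your sentence suggests. Second, the step ``incompressibility of $\SB(A)$ forces an $F$-rational point on $Y$'' is not how the argument runs; the actual mechanism in~\cite{Karphyporth} is an analysis of the motivic decomposition of the highest orthogonal Grassmannian in which Steenrod operations are the decisive tool constraining which summands can occur. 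Without Steenrod operations your outline has no leverage to rule out the bad configurations.

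The symplectic paragraph has a genuine gap. You propose to combine Theorem~\ref{BL} with ``excellence-type results for quadratic forms over $F(\SB_2(A))$''. But Theorem~\ref{BL} concerns odd-degree finite extensions and says nothing about the function field $\cF_{A,s}$; and excellence for $F(\SB_2(A))$ is not available in general (the paper itself recalls in \S\ref{pss.sec} that excellence already fails for function fields of Severi--Brauer varieties of biquaternion algebras). Tignol's argument in the appendix to~\cite{Karphyporth} does something entirely different, of the same flavour as the argument the paper sketches after Theorem~\ref{division.thm}: one passes to the Laurent series field $\hat F=F((x))((y))$, tensors $(A,\sigma)$ with the quaternion algebra $(x,y)_{\hat F}$ equipped with its canonical involution, thereby converting the symplectic involution into an orthogonal one, and then invokes the already-established orthogonal case. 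Your proposed route does not lead to a proof.
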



\label{genericodd.desc}
Using this result and the Artin-Springer theorem for quadratic forms (or even its weak version, for hyperbolicity) one gets another argument for a positive answer to question (ii) (see Lemma~\ref{as.lem}). Therefore, Theorem~\ref{Kh} can be considered as a generalization {of Bayer-Lenstra's Theorem~\ref{BL}} in the setting of central simple algebras. Its proof, based on computations of cycles on the underlying varieties, requires much more machinery than the original proof of Bayer and Lenstra, including {for instance} the Steenrod operations on Chow groups with coefficients in ${\mathbb Z}/2$. {Yet}, in addition to generalizing this previous result, Theorem~\ref{Kh} can be seen as an intermediate result that helped to pave the way for Karpenko and {Karpenko-}Zhykovich's later results on isotropy~\cite{Karpisoorth},~\cite{KZ}. {Moreover}, Theorem~\ref{Kh} also implies an interesting connection between isotropy and hyperbolicity of totally decomposable involutions {which we shall discuss in the next section.} 

\section{First results on isotropy}
\label{hypiso.sec}

Though question (i$'$) is largely open, there is some evidence in support of an affirmative answer. In the case where $A$ is split and $\sigma$ is orthogonal or unitary, or $A$ has index $2$ and $\sigma$ is symplectic, one can reduce to quadratic form theory (see \S~\ref{background.sec}) where the Artin-Springer theorem shows that anisotropy is preserved under any odd degree extensions. Since every symplectic involution on a split algebra is hyperbolic, this resolves the split case, for involutions of all three types. For division algebras, an affirmative answer to question (i$'$) is a consequence of the fact that, since $[L:F]$ is coprime to the index of $A$, the extended algebra $A_L$ is division. {Since a division algebra admits no isotropic involution, $\sigma_L$ remains anisotropic.}

{Next we highlight two cases in which isotropy reduces to hyperbolicity,  so that a positive answer to question (i$'$) follows from results in \S~\ref{bl.sec} above. The first such case is when $A$ has co-index 2. In this setting, the involution $\sigma$ is adjoint to a 2-dimensional hermitian form. Since isotropy is equivalent to hyperbolicity for such a form, Theorem~\ref{BL} {gives} that anisotropy is preserved {under field extensions of degree coprime to $2\ind(A)$}.

One can make a similar reduction for totally decomposable involutions. The following result, {due to Becher~\cite{Becher} in the orthogonal and symplectic cases}, will prove useful for this purpose:
\begin{theorem}\cite[Theorem 1 \& Corollary]{Becher} \label{Becher}
Let $(A,\sigma)$ be an algebra with involution over $F$ that decomposes as a tensor product of quaternion algebras with involution. We assume moreover that $A$ is split if $\sigma$ is orthogonal or unitary, and has index $2$ if $\sigma$ is symplectic. Then there exists a Pfister form $\pi$ such that $(A,\sigma)$ decomposes as follows: 
\begin{enumerate} 
\item $(A,\sigma)=\Ad_\pi$ if $\sigma$ is orthogonal; 
\item $(A,\sigma)=\Ad_\pi\otimes_F (K,\bar{\ })$ if $\sigma$ is unitary; 
\item $(A,\sigma)=\Ad_\pi\otimes_F (H,\bar{\ })$ if $\sigma$ is symplectic. 
\end{enumerate}
\end{theorem}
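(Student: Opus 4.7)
The plan is to reduce all three cases to the orthogonal-split case and then to establish that case by induction on the number $n$ of quaternion factors. In the orthogonal-split case, $(A,\sigma) = (\End_F V, \ad_\phi) = \bigotimes_{i=1}^n (Q_i,\sigma_i)$ with $A$ split, and the goal is to show that $\phi$ is similar to an $n$-fold Pfister form. The base case $n=1$ is immediate: $\phi$ is $2$-dimensional, hence similar to $\qform{1,-d}$ for $d=-\det\phi$, and we take $\pi=\qform{1,-d}$. For the inductive step, the key input is that if $Q_1\otimes_F Q_2$ is split and $\sigma_1\otimes \sigma_2$ is orthogonal, then $(Q_1,\sigma_1)\otimes_F (Q_2,\sigma_2) \cong \Ad_{\pi'}$ for a $2$-fold Pfister form $\pi'$. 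This follows from Albert's theorem: the resulting orthogonal involution on a split algebra of degree $4$ is adjoint to a $4$-dimensional form whose discriminant is trivial and whose Clifford invariant is represented by the split algebra $Q_1\otimes Q_2$; such a form is similar to a $2$-fold Pfister form. Combining two factors at a time and iterating yields $(A,\sigma)\cong\Ad_\pi$ for an $n$-fold Pfister form $\pi$.

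For the symplectic case with $A$ of index $2$, pick a quaternion $F$-algebra $H$ Brauer-equivalent to $A$. I would rewrite the given decomposition $\bigotimes(Q_i,\sigma_i)$ so that $(H,\bar{\ })$ appears as one of the factors, producing $(A,\sigma) \cong (A_0,\sigma_0)\otimes_F(H,\bar{\ })$ where $A_0$ is Brauer-equivalent to $A\otimes_F H$, hence split, and $\sigma_0$ is orthogonal (removing a single symplectic factor from an odd number leaves an even number of symplectic factors). The orthogonal-split case applied to $(A_0,\sigma_0)$ then yields $(A_0,\sigma_0)\cong\Ad_\pi$, and the claimed decomposition follows. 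For the unitary case with $A$ split over $K$, the analogous target decomposition is $(A,\sigma)\cong(A_0,\sigma_0)\otimes_F(K,\bar{\ })$ where $(A_0,\sigma_0)$ is a split $F$-algebra with orthogonal involution; one separates the $K$-linear conjugation part of $\sigma$ via a Galois-descent-type argument, absorbing the remaining structure into $(A_0,\sigma_0)$, and applies the orthogonal-split case.

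The main obstacle across all three cases is the rearrangement of the given decomposition needed to expose the desired tensor factor. In the inductive step of the orthogonal-split case, two quaternion factors must combine into a $2$-fold Pfister block, but the relation $\sum [Q_i] = 0$ in $\Br(F)$ is only a global cancellation, not a pairwise one; the chain lemma for quaternion algebras must be used to convert the given decomposition into an equivalent one where two specific factors have inverse Brauer classes. The extraction of an $(H,\bar{\ })$ factor in the symplectic case and of a $(K,\bar{\ })$ factor in the unitary case are analogous rearrangement problems, with the unitary case additionally requiring descent for hermitian forms to match the $K$-linear involution on $A$ with the canonical involution on $(K,\bar{\ })$.
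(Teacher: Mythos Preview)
Your proposal diverges sharply from what the paper does. The paper does not attempt to reprove parts (1) and (3): those are exactly Becher's Theorem~1 and its Corollary in \cite{Becher} (the Pfister Factor Conjecture), and the paper simply invokes them. The only new content in the paper's proof is part (2), and there the argument is much more concrete than your ``Galois-descent-type'' sketch: by \cite[(2.22)]{KMRT}, every quaternion $K$-algebra with $K/F$-unitary involution is already of the form $(H_i,\bar{\ })\otimes_F(K,\bar{\ })$ for a quaternion $F$-algebra $H_i$, so no rearrangement is needed; one immediately gets $(A,\sigma)=(A_0,\sigma_0)\otimes_F(K,\bar{\ })$ with $(A_0,\sigma_0)$ totally decomposable over $F$ of index at most $2$, and then applies Becher's results to $(A_0,\sigma_0)$, followed by a final step absorbing the residual quaternion factor $(H_0,\gamma)$ into the Pfister form after tensoring with $(K,\bar{\ })$.

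Your attempt to prove part (1) directly has a genuine gap, and it is precisely the gap that made the Pfister Factor Conjecture resist proof for three decades. Your induction needs, at each step, two quaternion-with-involution factors whose tensor product is split; but as you yourself note, the global relation $\sum_i[Q_i]=0$ in $\Br(F)$ gives no pairwise cancellation. The ``chain lemma'' you invoke is the common-slot lemma: it compares different symbol presentations of a \emph{single} quaternion algebra, and says nothing about redecomposing a tensor product of several quaternion algebras \emph{with involution} into new quaternion-with-involution factors. Even at the level of Brauer classes such a rearrangement is unclear; tracking involutions through it is the whole problem. Becher's actual proof does not proceed this way: it goes by induction, but the inductive step passes to a carefully chosen quadratic extension (the discriminant extension of one orthogonal factor) rather than trying to pair off factors over $F$. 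The same objection applies to your symplectic reduction: there is no a priori reason the Brauer-equivalent quaternion $(H,\bar{\ })$ can be exposed as a tensor factor of the given decomposition with involution, and asserting that it can is again equivalent to what has to be proved.
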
 
\begin{proof}
Assertions (1) and (3) are Theorem 1 and its Corollary in \cite{Becher}. Assertion (2) also follows easily from those results. Indeed, assume $(A,\sigma)$ is a tensor product of quaternion algebras, each endowed with a $K/F$ unitary involution. By~\cite[(2.22)]{KMRT}, each factor decomposes as $(H_i,\bar{\ })\otimes_F(K,\bar{\ })$, for some quaternion algebra $H_i$ over $F$. Therefore, $(A,\sigma)$ has a decomposition $(A,\sigma)=(A_0,\sigma_0)\otimes _F(K,\bar{\ })$, where $(A_0,\sigma_0)$ is a totally decomposable algebra with orthogonal or symplectic involution, depending on the parity of the number of factors. Moreover, since $A=A_0\otimes_F K$ is split, $A_0$ has index at most $2$. Therefore, by~\cite[Corollary \& Theorem 2]{Becher}, $(A_0,\sigma_0)=\Ad_{\pi_0}\otimes (H_0,\gamma)$ for some Pfister form $\pi_0$ and some quaternion algebra with orthogonal or symplectic involution $(H_0,\gamma)$. To conclude, it only remains to observe that since $H_0\otimes_F K$ is split, $\gamma\otimes \bar{\ }$ is adjoint to a $2$-dimensional hermitian form with values in $(K,\bar{\ })$. Up to a scalar, this form has a diagonalisation $\qform{1,-\mu}$ for some $\mu\in F^\times$, so that $(H_0,\gamma)\otimes_F(K,\bar{\ })\simeq \Ad_\qform{1,-\mu}\otimes_F (K,\bar{\ })$. This concludes the proof, with $\pi=\pi_0\otimes\qform{1,-\mu}$. 
 \end{proof}
 
For totally decomposable involutions, we get an affirmative answer to question (i$'$), and even a slightly stronger result in the unitary case. Indeed, given Becher's Theorem~\ref{Becher}, {Karpenko's Theorem}~\ref{Kh} admits the following corollary:  \begin{corollary}
\label{totdec.cor}
Let $(A,\sigma)$ be an algebra with involution over $F$ that decomposes as a tensor product of quaternion algebras with involution over $F$. 
If $\sigma$ is isotropic, then it is hyperbolic. In particular, if $\sigma$ is anisotropic, it remains anisotropic over any odd degree extension of the base field.
\end{corollary}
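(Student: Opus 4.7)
The plan is to deduce the corollary essentially by combining two heavyweight results already available: Becher's structure Theorem~\ref{Becher} on the form that a totally decomposable involution takes once the underlying algebra is split (or of index $2$ in the symplectic case), and Karpenko's Theorem~\ref{Kh}, which descends hyperbolicity from the generic field $\cF_{A,t}$ back to $F$. The bridge between them is the well-known fact that a Pfister quadratic form is hyperbolic as soon as it is isotropic.

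I would argue as follows. First, assume $\sigma$ is isotropic and pass to $\cF_{A,t}$. Since the tensor decomposition of $(A,\sigma)$ into quaternion algebras with involution is preserved under scalar extension, $(A_{\cF_{A,t}},\sigma_{\cF_{A,t}})$ is again totally decomposable. Moreover, by construction of $\cF_{A,t}$ (cf.\ \S~\ref{background.sec}), the algebra $A_{\cF_{A,t}}$ is split when $t\in\{o,u\}$ and has index at most $2$ when $t=s$, so Becher's Theorem~\ref{Becher} applies and yields a Pfister form $\pi$ over $\cF_{A,t}$ such that
\[
(A_{\cF_{A,t}},\sigma_{\cF_{A,t}}) \,\simeq\,
\begin{cases}
\Ad_\pi & \text{if } t=o,\\
\Ad_\pi \otimes_{\cF_{A,t}} (K\otimes_F \cF_{A,t},\bar{\ }) & \text{if } t=u,\\
\Ad_\pi \otimes_{\cF_{A,t}} (H,\bar{\ }) & \text{if } t=s.
\end{cases}
\]
In each of the three cases one checks that the isotropy (resp.\ hyperbolicity) of $\sigma_{\cF_{A,t}}$ is controlled by that of $\pi$: in the orthogonal case this is tautological; in the unitary case it follows from the description of hermitian forms over a quadratic extension via their trace form; and in the symplectic case from the analogous description of hermitian forms over $(H,\bar{\ })$ (both used already in \S~\ref{background.sec}). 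Since isotropy of $\sigma$ passes to $\sigma_{\cF_{A,t}}$, the Pfister form $\pi$ is isotropic, hence hyperbolic, hence $\sigma_{\cF_{A,t}}$ is hyperbolic. By Karpenko's Theorem~\ref{Kh}, $\sigma$ itself is hyperbolic, proving the first assertion.

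For the ``In particular'' clause, suppose $\sigma$ is anisotropic and let $L/F$ be of odd degree. The algebra $(A_L,\sigma_L)$ is still totally decomposable, so applying the first part over $L$ shows that if $\sigma_L$ were isotropic it would be hyperbolic. But then the Bayer--Lenstra Theorem~\ref{BL}, applied to a hermitian form associated to $\sigma$, would force $\sigma$ itself to be hyperbolic, contradicting anisotropy. Hence $\sigma_L$ is anisotropic.

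The only real content that needs verification is the case-by-case statement that isotropy of the decomposed involution reduces to isotropy of the Pfister factor $\pi$; this is essentially a bookkeeping exercise using the trace-form dictionary, and I expect the referee-facing ``main obstacle'' to be writing it cleanly rather than discovering anything new. Everything else is an assembly of Theorems~\ref{BL}, \ref{Kh} and~\ref{Becher}.
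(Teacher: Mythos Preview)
Your argument is essentially the paper's own proof: pass to $\cF_{A,t}$, apply Becher's Theorem~\ref{Becher}, use that an isotropic Pfister form is hyperbolic, descend via Karpenko's Theorem~\ref{Kh}, and then invoke Bayer--Lenstra for the final clause. One small correction to your bookkeeping: in the unitary and symplectic cases isotropy of $\sigma_{\cF_{A,t}}$ is governed by the \emph{trace form} $\pi\otimes\qform{1,-\delta}$ (resp.\ $\pi\otimes n_H$) rather than by $\pi$ alone---$\pi$ can be anisotropic while the hermitian form is isotropic---but since this trace form is itself a Pfister form the conclusion is unaffected.
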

\begin{proof} 
If $\sigma$ is isotropic, then $\sigma_{\cF_{A,t}}$ is isotropic. Therefore, in view of~\ref{Kh}, it is enough to prove that isotropy implies hyperbolicity over $\cF_{A,t}$, or equivalently, that isotropy implies hyperbolicity for split algebras with orthogonal or unitary involution and index $2$ algebras with symplectic involution. {This was shown by Becher~\cite{Becher} in the orthogonal and symplectic cases, and follows easily from~\ref{Becher}. In the orthogonal case, $\sigma$ is adjoint to a Pfister form $\pi$. In the unitary and symplectic cases, $\sigma$ is adjoint to a hermitian form with trace form $q_h=\pi\otimes \qform{1,-\delta}$ and $q_h=\pi\otimes n_H$ respectively, where $\delta$ is given by $K=F(\sqrt\delta)$, and $n_H$ denotes the norm form of $H$.} In particular, in all three cases, the involution $\sigma$ is determined by a quadratic form which is a Pfister form. Since isotropy implies hyperbolicity for Pfister forms, we obtain the desired result. The last assertion follows by Bayer-Lenstra's Theorem~\ref{BL}. 
\end{proof} 
The first example of a positive answer to question (i$'$) that does not reduce to hyperbolicity is due to Parimala, Sridharan and Suresh~\cite{PSS}. That result is the main subject of the next section.

 \section{Orthogonal involutions on algebras of index $2$ and excellence for hermitian forms}
 \label{pss.sec}
 
{An affirmative answer to question (i$'$) for algebras of index 2 with orthogonal or symplectic involution was proven by Parimala, Sridharan and Suresh \cite{PSS}. The argument in the symplectic case is elementary and was explained at the beginning of the previous section (see also~\cite[Proof of thm 3.5]{PSS}). The key result in the orthogonal case is an excellence result for function fields of conics. We recall that a field extension $L/F$ is said to be \emph{excellent} for a hermitian form $h$ {over $F$} if there is a hermitian form $h^\prime$ {over F} such that $h^\prime_L$ is isomorphic to the anisotropic part of $h_L$. {The authors prove excellence of the function field of any smooth projective conic defined over the base field.}

\begin{theorem} \cite[Theorem 2.2]{PSS} \label{excellence} Let $(D,\theta)$ be an algebra with orthogonal or symplectic involution over $F$ and let ${\mathcal C}$ be a smooth, projective conic over $F$, with function field $\cF$. The  field extension $\cF/F$ is excellent for hermitian forms over $(D,\theta)$. \end{theorem}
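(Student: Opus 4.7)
The plan is to prove that for any hermitian form $h$ over $(D,\theta)$, the anisotropic part of $h_{\cF}$ is defined over $F$, i.e.\ isomorphic to $h'_{\cF}$ for some hermitian form $h'$ over $(D,\theta)$. I would model the argument on the classical proof of excellence of function fields of conics for quadratic forms (Rost, Arason, Knebusch), proceeding by induction on the Witt index of $h_{\cF}$.

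First, the split case is straightforward. If $\mathcal{C}$ has an $F$-rational point, then $\cF\simeq F(t)$ is purely transcendental, and one checks, via a degree argument on a polynomial isotropy witness transposed from quadratic form theory to hermitian forms over $(D,\theta)$, that an anisotropic hermitian form remains anisotropic after scalar extension to $F(t)$. Consequently the anisotropic part of $h_{\cF}$ equals the extension to $\cF$ of the anisotropic part of $h$ over $F$, and we are done.

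Assume then that $\mathcal{C}$ is anisotropic. If $h_{\cF}$ is anisotropic, take $h'=h$. Otherwise the key step is a descent lemma: if $h$ is anisotropic over $F$ and $h_{\cF}$ is isotropic, there exists a subform $h_0$ of $h$ defined over $F$ such that $(h_0)_{\cF}$ is hyperbolic. Granted this lemma, Witt cancellation for hermitian forms over $(D,\theta)$ decomposes $h\simeq h_0\perp h_1$ over $F$, and since $h_{\cF}$ and $(h_1)_{\cF}$ have the same anisotropic part while $i_W((h_1)_{\cF})<i_W(h_{\cF})$, an induction on the Witt index produces the required $h'$.

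The main obstacle is the descent lemma. I would fix an equation $\mathcal{C}\colon aX^2+bY^2=Z^2$ and exploit the isomorphism $\cF\otimes_F K\simeq K(t)$ for $K=F(\sqrt{a})$ to view $\cF$ as a degree-$2$ subfield of the rational function field $K(t)$, carrying a nontrivial Galois action over $F(t)$. An isotropy witness for $h_{\cF}$ can then be cleared of denominators to an integral witness over a localization of $F[\mathcal{C}]$ and analyzed via specialization at a well-chosen closed point, producing an $F$-subform $h_0\subset h$ of small dimension (in the quadratic case one recovers a $2$-fold Pfister form) which becomes hyperbolic over $\cF$. The technical difficulty, not present in the quadratic case, is that the specialization and the accompanying Scharlau transfer must interact correctly with the involution $\theta$ on $D$; the crucial input, which I expect to be the hardest part to set up carefully, is a hermitian analogue of Knebusch's norm principle over discrete valuation rings with involution, ensuring that the descended subform is defined over $F$ rather than merely over the residue field of the chosen closed point of $\mathcal{C}$.
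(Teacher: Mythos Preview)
The paper does not give its own proof of this statement: Theorem~\ref{excellence} is simply quoted from \cite[Theorem~2.2]{PSS} and then used as a black box in the derivation of Corollary~\ref{PSSgeneric}. So there is nothing in the present paper to compare your proposal against.

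That said, a brief remark on strategy. Your plan is to rerun the Arason--Rost--Knebusch excellence argument directly on the hermitian side, manufacturing a hermitian analogue of the subform/norm-principle machinery. The approach actually taken in \cite{PSS} is different and considerably shorter: rather than redeveloping the descent lemma for hermitian forms, they pass to associated quadratic forms (trace forms in the symplectic case, and an analogous construction in the orthogonal quaternion case), invoke the already-known excellence of the function field of a conic for quadratic forms, and then transfer the conclusion back to hermitian forms using the fact that over a quaternion algebra these trace maps are injective on Witt groups (\cite[Chap.~10, Thms.~1.1 and~1.7]{Scharlau}). This sidesteps entirely the ``hermitian Knebusch norm principle'' that you identify as the hard step. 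Your direct approach is not wrong in spirit, but the descent lemma you need is genuinely delicate---you would have to control how the involution on $D$ interacts with residues and specializations---and you have left precisely that part as a hope rather than an argument. If you want a self-contained proof, the reduction-to-quadratic-forms route is both easier and what the cited source actually does.
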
 

{Since when $A$ has index 2, the field $\cF_{A,o}$ is a purely transcendental extension of the function field of a conic, with Theorem~\ref{excellence} in hand, one can show the following:}

\begin{corollary} \cite[Cor. 3.4, thm. 3.5]{PSS} \label{PSSgeneric}
Let $A$ be an algebra of index 2, and let $\cF_{A,o}$ be the function field of the Severi-Brauer variety of $A$. Anisotropic orthogonal involutions on $A$ remain anisotropic over the function field $\cF_{A,o}$, hence also over all odd-degree field extensions of $F$. 
\end{corollary}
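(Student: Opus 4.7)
The plan is, by Lemma~\ref{as.lem}, to show that $\sigma$ remains anisotropic after extension to $\cF_{A,o}$. Let $D$ be the quaternion division algebra Brauer-equivalent to $A$, let $\mathcal{C}=\SB(D)$ be the associated conic, and set $\cF:=F(\mathcal{C})$. Since $A$ has index $2$, a standard result on Severi-Brauer varieties yields that $\cF_{A,o}$ is purely transcendental over $\cF$. Because anisotropy of hermitian forms over $(D,\theta)$ is preserved under purely transcendental extensions (the hermitian Witt group of $L$ embeds into that of $L(t)$), the task reduces to showing that $\sigma_\cF$ itself is anisotropic.

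Fix a representation $(A,\sigma)=(\End_D(V),\ad_h)$ and assume, toward a contradiction, that $\sigma_\cF$ is isotropic. By Theorem~\ref{excellence}, there exists a hermitian form $h'$ over $(D,\theta)$ with $h'_\cF\cong(h_\cF)_{\mathrm{an}}$; isotropy of $h_\cF$ forces $\dim h'<\dim h$, and $h_\cF$ and $h'_\cF$ have the same anisotropic part. Consequently, the form $h\perp(-h')$ over $(D,\theta)$ becomes hyperbolic over $\cF$; equivalently, the orthogonal involution $\tilde\sigma:=\ad_{h\perp(-h')}$ on the index-$2$ algebra $\tilde A:=\End_D(V\oplus V')$ is hyperbolic over $\cF$.

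The next step is to descend the hyperbolicity of $\tilde\sigma$ from $\cF$ to $F$. Over $\cF_{\tilde A,o}$, the algebra $\tilde A$ (hence $D$) is split, so the conic $\mathcal{C}$ becomes rational, and the compositum $\cF\cdot\cF_{\tilde A,o}$ is therefore purely transcendental over $\cF_{\tilde A,o}$. Hyperbolicity of $\tilde\sigma$ over $\cF$ thus transfers first to the compositum and then, by injectivity of the hermitian Witt group along purely transcendental extensions, to $\cF_{\tilde A,o}$. Theorem~\ref{Kh} in the index-$2$ orthogonal case (originally due to Parimala-Sridharan-Suresh) then gives that $\tilde\sigma$ is already hyperbolic over $F$, so $h$ and $h'$ are Witt-equivalent over $F$. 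Since $h$ is anisotropic over $F$, this forces $\dim h\leq\dim h'$, contradicting $\dim h'<\dim h$.

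I expect the main obstacle to be precisely this descent of hyperbolicity from $\cF$ to $F$, carried out by bridging $\cF$ and $\cF_{\tilde A,o}$ through their compositum and invoking the deep Theorem~\ref{Kh}. The remaining ingredients---the reduction to the conic via pure transcendence, the application of excellence, and the final dimension-count contradiction---are, by comparison, routine.
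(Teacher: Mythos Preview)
Your proof is correct and follows essentially the same route as the paper's: use excellence of the conic to produce $h'$, observe that $h\perp(-h')$ becomes hyperbolic, descend hyperbolicity to $F$ via Theorem~\ref{Kh} (or its index-$2$ precursor), and conclude by a dimension count. The only organizational difference is that the paper works directly over $\cF_{A,o}$ throughout (using that excellence of $\cF/F$ passes to the purely transcendental extension $\cF_{A,o}/F$), whereas you first reduce to the conic $\cF$ and then must bridge back to $\cF_{\tilde A,o}$; your compositum argument for that bridge is valid but more elaborate than needed, since $\tilde A$ is Brauer-equivalent to $D$ and hence $\cF_{\tilde A,o}$ is already a purely transcendental extension of $\cF$, so hyperbolicity over $\cF$ passes up to $\cF_{\tilde A,o}$ directly.
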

The argument in~\cite{PSS} goes as follows. Let $Q$ be a quaternion division algebra Brauer equivalent to $A$.Pick an orthogonal involution $\theta$ on $Q$ so that $\sigma$ is adjoint to some hermitian form $h$ over $(Q,\theta)$. 
If $\sigma$ is isotropic over $\cF_{A,o}$, then $h$ is isotropic over $\cF_{A,o}$. Since $A$ has index 2, the field $\cF_{A,o}$ is a purely transcendental extension of the function field of a conic and thus by excellence, there is a hermitian form $h^\prime$ over $(Q,\theta)$ such that $h^\prime_{\cF_{A,o}}$ is isomorphic to the anisotropic part of $h_{\cF_{A,o}}$. Hence, the form $h\perp -h'$ is hyperbolic over $\cF_{A,o}$ and by Theorem~\ref{Kh} (see also \cite[Prop]{Dejaiffe},\cite[Proposition 3.3]{PSS}), it follows that $h\perp-h'$ is hyperbolic over $F$. In view of the dimensions, this implies that $h$ is isotropic.}
The second assertion follows immediately, by the Artin-Springer theorem (see Lemma~\ref{as.lem}). 
Hence this gives a positive answer to question (i$'$) for orthogonal involutions on algebras of index $2$.

The excellence property of function fields of conics used in this section does not extend to function fields of Severi-Brauer varieties. For instance, Izhboldin-Karpenko~\cite{IK} and Sivatski~\cite{Sivatski2} proved that the function field of a division biquaternion algebra {does not satisfy the excellence property} for quadratic forms over the base field. {However}, the proof of Corollary~\ref{PSSgeneric} only uses a very particular case of the excellence property, namely excellence for hermitian forms over the underlying quaternion algebra with orthogonal involution $(Q,\theta)$. 
{This part of the argument is general, and one can reduce the anisotropy question to an excellence {question, since anisotropy is known to be preserved over $\cF_{D,t}$ for any involution on a division algebra $D$}. This latter result is due to Karpenko and Tignol and is discussed in the next section.}

\section{Isotropy over the function field $\cF_{A,t}$}
 \label{generic.sec}
 
In his first paper on the topic of the present survey~\cite{Karpanisotropy}, Karpenko stated the following conjecture:
\begin{conjecture}\label{generic}
\cite[Conjecture 5.2]{Karpanisotropy} Let $A$ be a central simple algebra over a field $F$. An anisotropic orthogonal involution remains anisotropic over $\cF_{A,o}$.  
\end{conjecture}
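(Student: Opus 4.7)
My plan is to argue by contradiction, combining an excellence strategy in the spirit of Parimala--Sridharan--Suresh (\S\ref{pss.sec}) with Karpenko's hyperbolicity theorem (Theorem~\ref{Kh}). Suppose $(A,\sigma)$ is anisotropic while $\sigma_{\cF_{A,o}}$ becomes isotropic. Write $A\simeq \End_D(V)$ with $D$ the underlying division algebra (of index a power of $2$, since orthogonal involutions force exponent dividing $2$), fix an orthogonal involution $\theta$ on $D$, and write $\sigma=\ad_h$ for an associated hermitian form $h$ over $(D,\theta)$. Then $h$ is anisotropic while $h_{\cF_{A,o}}$ is isotropic, and our task is to derive a contradiction.

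The key would be to produce a hermitian form $h'$ over $(D,\theta)$, defined already over $F$, whose extension $h'_{\cF_{A,o}}$ represents the anisotropic part of $h_{\cF_{A,o}}$ in the Witt group of $(D_{\cF_{A,o}},\theta_{\cF_{A,o}})$. Granting such an $h'$, the orthogonal sum $h\perp(-h')$ becomes hyperbolic over $\cF_{A,o}$; Theorem~\ref{Kh} then forces $h\perp(-h')$ to be hyperbolic already over $F$, and a dimension count (using that $h'_{\cF_{A,o}}$ accounts for precisely the anisotropic part) would force $h$ itself to be isotropic, contradicting our hypothesis.

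To construct $h'$, I would proceed by induction on $\ind(A)$. The case $\ind(A)=1$ reduces to the Artin--Springer theorem, and the case $\ind(A)=2$ is precisely Theorem~\ref{excellence} combined with the fact that $\cF_{A,o}$ is purely transcendental over the function field of a conic. For the inductive step, I would attempt to exploit the generic splitting tower of $A$: at each stage the index of $A$ drops strictly, and the behavior of $h$ can be tracked via the intermediate Witt groups. The Karpenko--Tignol result cited in the excerpt---that anisotropy is preserved over $\cF_{D,o}$ for a division algebra $D$---would anchor the induction at the stage where the Brauer class has been maximally reduced but the underlying algebra remains division.

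The main obstacle is establishing the required excellence property of $\cF_{A,o}/F$ for hermitian forms over $(D,\theta)$ beyond the index-$2$ case. The work of Izhboldin--Karpenko and Sivatski shows that $\cF_{A,o}/F$ fails to be excellent for arbitrary quadratic forms over $F$, so any successful argument must crucially leverage the hermitian structure over $(D,\theta)$ and its pairing with the Brauer class of $A$. A complementary---and possibly more tractable---route would be a Chow-motivic one, modeled on Karpenko's approach to Theorem~\ref{Kh}: study the orthogonal involution variety $X=X(A,\sigma)$ parametrizing isotropic right ideals of reduced dimension $\ind(A)$, and apply Steenrod operations on mod-$2$ Chow groups together with the theory of upper motives to show that the rational cycles on $X\times \SB(A)$ forced by the existence of an $\cF_{A,o}$-point of $X$ must descend to rational zero-cycles on $X$, yielding an $F$-rational point and hence isotropy of $\sigma$ over $F$.
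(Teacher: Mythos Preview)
The statement you are attempting to prove is labeled as a \emph{Conjecture} in the paper, and the paper does not contain a proof of it; indeed, the surrounding text emphasizes that the general case remains open. So there is no ``paper's own proof'' to compare against, and your proposal should be read as an attempted attack on an open problem.

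Your proposal does not close the gap. The excellence strategy you outline is precisely the content of Corollary~\ref{excellence.prop}: the paper shows that excellence of $\cF_{D,t}/F$ for the hermitian form $h$ is \emph{equivalent} to the anisotropy statement you want, not a route to proving it. In other words, your ``key step''---producing $h'$ over $F$ with $h'_{\cF_{A,o}}$ the anisotropic part of $h_{\cF_{A,o}}$---is a restatement of the conjecture, not a reduction of it. You acknowledge this yourself when you identify the excellence property beyond index~$2$ as ``the main obstacle,'' but your inductive sketch for that step does not work: passing along the generic splitting tower changes the base field at each stage, so the inductive hypothesis would give you a descent of the anisotropic part to an \emph{intermediate} function field, not to $F$. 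The anchor you cite (Theorem~\ref{division.thm}) controls what happens over $\cF_{D,o}$, but gives no mechanism to push $h'$ back down through the tower to $F$. The Izhboldin--Karpenko and Sivatski non-excellence results you mention are exactly the evidence that such a naive descent fails.

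Your alternative motivic sketch is likewise not a proof: it is a description of the kind of argument Karpenko used for Theorem~\ref{Kh}, but the passage from ``hyperbolic over $\cF_{A,o}$ implies hyperbolic over $F$'' to ``isotropic over $\cF_{A,o}$ implies isotropic over $F$'' is precisely the open content of the conjecture, and no known cycle-theoretic argument bridges it in general. What \emph{is} known in this direction is Theorem~\ref{karmainthm}, which says that isotropy over $\cF_{A,o}$ forces isotropy over some odd-degree extension---still short of isotropy over $F$ itself.
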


{The analogous statement for unitary involutions is false in general. Parimala, Sridharan and Suresh~\cite[Thm 4.3]{PSS} showed that for every odd prime $p$, there is an algebra of $p$-power index with unitary involution which becomes isotropic over an odd-degree field extension. By Lemma~\ref{as.lem}, it follows that the involution also becomes isotropic over $\cF_{A,t}$. However, whether the conjecture holds not just for orthogonal involutions, but also for symplectic involutions or unitary involutions on algebras of $2$-power index is an open question. There is some evidence in support of the conjecture in these settings.} 

The conjecture holds if $A$ is split and $\sigma$ is orthogonal since in this case $\cF_{A,o}$ is a purely transcendental extension of $F$. The same argument applies to the unitary split case. Since there are no anisotropic symplectic involutions on a split algebra, the result also trivially holds in the split symplectic case. For division algebras, Karpenko and Tignol proved the following: 

\begin{theorem}\cite[Theorem 5.3]{Karpanisotropy}, \cite[Theorem. A.1 \& A.2]{Karphyporth}
\label{division.thm}
{Let $(D,\theta)$ be a division algebra with an anisotropic involution of type $t$. If $\theta$ is unitary, we assume that $D$ has exponent $2$. The involution} $\theta$ remains anisotropic over $\cF_{D,t}$.
\end{theorem}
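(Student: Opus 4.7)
Since $D$ is a division algebra, no nonzero $x\in D$ satisfies $\theta(x)x=0$, and $\theta$ is therefore automatically anisotropic over $F$; the content of the theorem is that this anisotropy persists after base change to $\cF_{D,t}$. My plan is to argue by contradiction: suppose $\theta_{\cF_{D,t}}$ is isotropic, and reformulate the question geometrically. Let $X$ denote $\SB(D)$ if $t=o$, the generalized Severi-Brauer variety $\SB_2(D)$ if $t=s$, and the Weil restriction of $\SB(D)$ if $t=u$, so that $\cF_{D,t}=F(X)$. Let $Y=Y(D,\theta)$ be the twisted projective homogeneous variety parameterizing isotropic right ideals of $(D,\theta)$ of minimal possible reduced dimension; then $\theta_L$ is isotropic if and only if $Y(L)\neq\emptyset$. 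The hypothesis $Y(F(X))\neq\emptyset$ thus produces a rational map $X\dashrightarrow Y$ defined over $F$.

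The crucial step is to combine this with Karpenko's incompressibility theorem for (generalized) Severi-Brauer varieties of division algebras. In all three cases, the hypothesis on $\theta$ (of type $o$ or $s$, or of type $u$ with exponent $2$) forces $\ind(D)$ to be a power of $2$, since exponent and index of a central simple algebra share the same prime factors. One can then invoke $2$-incompressibility of $X$. Using the mod-$2$ Chow motivic decomposition of $Y$ into twists of motives of Severi-Brauer-type varieties attached to subalgebras of $D$, together with $2$-incompressibility of $X$ and the Rost nilpotence principle, I would argue that any rational map $X\dashrightarrow Y$ must factor through an $F$-rational point of $Y$. Such a point yields an isotropic right ideal of $(D,\theta)$ already defined over $F$, contradicting the division property of $D$ noted at the outset.

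The main obstacle lies in the motivic input, which goes significantly deeper than Theorems~\ref{BL} and~\ref{Kh}. One needs a fine understanding of the mod-$2$ Chow motive of the twisted flag varieties attached to $\mathrm{PGO}(D,\theta)$, $\mathrm{PGSp}(D,\theta)$, and $\mathrm{PGU}(D,\theta)$, Karpenko's computation of the canonical $2$-dimension of $X$, and the Steenrod operations on $\mathrm{CH}^*(-;\mathbb{Z}/2)$. In the symplectic and unitary cases, Tignol's alternative appendix-style argument instead proceeds more algebraically, transferring the question to a hermitian form over a smaller division algebra and exploiting the Scharlau transfer to descend an isotropic vector; this route avoids part of the motivic machinery but still relies on the $2$-primary index hypothesis. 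The exponent-$2$ hypothesis in the unitary case is essential: Parimala-Sridharan-Suresh's construction produces anisotropic unitary involutions on division algebras of odd-prime-power index that become isotropic after an odd-degree extension, so no $2$-local argument can succeed without it.
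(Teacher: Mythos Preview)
Your outline for the orthogonal case points in the right direction---Karpenko's original proof in \cite{Karpanisotropy} does rest on cycle computations in mod-$2$ Chow groups---but what you have written is only a plan, not a proof. The assertion that ``any rational map $X\dashrightarrow Y$ must factor through an $F$-rational point of $Y$'' is not something that follows formally from $2$-incompressibility of $X$ plus a motivic decomposition of $Y$; you would need to say precisely which motivic summands of $Y$ appear, why the existence of the rational map forces a particular summand, and how that summand produces an $F$-point. As it stands, the crucial step is a black box.

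More seriously, your description of Tignol's argument in the symplectic and unitary-exponent-$2$ cases is incorrect. Tignol does \emph{not} pass to a hermitian form over a smaller division algebra, and Scharlau's transfer plays no role. The paper sketches his actual method: one passes to the iterated Laurent series field $\hat F=F((x))((y))$, tensors $(D,\theta)$ with the quaternion algebra $(x,y)_{\hat F}$ equipped with its canonical (symplectic) involution, and obtains a \emph{larger} division algebra $\hat D$ with an \emph{orthogonal} involution $\hat\theta$. Residue arguments show that anisotropy of $\theta$ and $\hat\theta$ are equivalent. One then applies Karpenko's orthogonal result to $(\hat D,\hat\theta)$ over $\cF_{\hat D,o}$; since $\hat D$ splits there, $D$ drops to index $2$ over $\cF_{\hat D,o}$ while $\theta$ stays anisotropic, and genericity then yields anisotropy over $\cF_{D,s}$. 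In particular, Tignol's route does \emph{not} avoid the motivic machinery: it reduces to it.
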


In the orthogonal case, Theorem ~\ref{division.thm} is the main result in~\cite{Karpanisotropy}. Its proof {involves} cycle computations in Chow groups with values in ${\mathbb Z}/2$. Tignol extended the {result} to symplectic {involutions}, and unitary involutions on algebras of exponent $2$. {We sketch {here} {the} beautiful} and rather elementary argument {that he gives} {in the symplectic case}. Given an algebra $D$ with symplectic involution $\theta$ over $F$, consider the iterated Laurent series field $\hat F=F((x))((y))$, the quaternion division algebra $(x,y)_{\hat F}$ over $\hat F$, and the tensor product $\hat D=D\otimes_F (x,y)_{\hat F}$. Since the canonical involution $\gamma$ on $(x,y)_{\hat F}$ is symplectic, the involution $\hat\theta=\theta\otimes \gamma$ on $\hat D$ is orthogonal. Using some residue computations, one may check that $D$ is division if and only if $\hat D$ is division, and that $\theta$ is anisotropic if and only if $\hat\theta$ is anisotropic. {Since $\theta$ is anisotropic by assumption}, Karpenko's result in the orthogonal case implies that $\hat\sigma$ remains anisotropic over $\cF_{\hat D,o }$. Further, since $\hat D$ necessarily splits over  $\cF_{\hat D, o}$ then $A$ is Brauer equivalent to $(x,y)_{\cF_{\hat D, o}}$ over $\cF_{\hat D,o }$. Thus $D$ has index $2$ {and $\theta$ is anisotropic} over this field. It follows that $\theta$ remains anisotropic over the generic index reduction field $\cF_{D,s}$.

Recall that by Lemma~\ref{as.lem}, a proof of Conjecture~\ref{generic} would give an affirmative answer to question (i$^\prime$). In fact, {interest in this question seemed to be the initial motivation} for studying Conjecture~\ref{generic}. However, within a decade of stating the conjecture, Karpenko, Tignol and Karpenko-Zhykhovich had proven the following converse of Lemma~\ref{as.lem}: 

\begin{theorem}\cite[Theorem 1]{Karpisoorth}\cite[Appendix]{Karpisoorth},\cite{Karpisosym},\cite[Theorem 6.1]{KZ}
\label{karmainthm}
Let $(A,\sigma)$ be a central simple algebra over $F$ with an anisotropic involution of type $t$. If $\sigma_{\cF_{A,t}}$ is isotropic, then there exists an odd-degree field extension $L/F$ such that $\sigma_L$ is isotropic. 
\end{theorem}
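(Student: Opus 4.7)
The plan is to argue by contraposition: we assume that $\sigma$ is anisotropic and remains so over every finite odd-degree extension of $F$, and aim to conclude that $\sigma_{\cF_{A,t}}$ is anisotropic. Writing $(A,\sigma)=(\End_D(V),\ad_h)$ with $D$ central division over $F$ (or over its quadratic subfield, in the unitary case), the natural geometric object is the \emph{isotropy variety} $X=X_1(A,\sigma)$ parametrizing isotropic right ideals of minimal positive reduced dimension; for every field extension $E/F$ one has $X(E)\neq\emptyset$ if and only if $\sigma_E$ is isotropic. The contrapositive hypothesis says exactly that $X$ has no closed point of odd degree, while one assumes towards a contradiction that $X$ acquires a rational point over $F(Y)$, where $Y$ is the generalized Severi--Brauer variety whose function field realizes $\cF_{A,t}$: namely $\mathrm{SB}(A)$ if $t=o$, $\mathrm{SB}_2(A)$ if $t=s$, and the Weil transfer of $\mathrm{SB}(A)$ if $t=u$. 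In the division case $V$ is one-dimensional over $D$ and the conclusion is already Theorem~\ref{division.thm}, so the task is to handle the higher co-index case.

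The heart of the argument is a motivic translation of the point $X(F(Y))\neq\emptyset$. One introduces the Chow motive $M(X;\mathbb{F}_2)$ with $\mathbb{F}_2$-coefficients together with its \emph{upper motive} $U(X)$, defined as the unique indecomposable summand of $M(X;\mathbb{F}_2)$ through which the Tate motive factors. A general principle from Karpenko's theory of upper motives ensures that the existence of a rational point on $X$ over $F(Y)$ forces $U(X)$ to be a direct summand of $M(Y;\mathbb{F}_2)$. Combining this with Karpenko's $2$-incompressibility theorem for generalized Severi--Brauer varieties, which describes $U(Y)$ and its canonical $2$-dimension explicitly, one obtains the identification $U(X)\cong U(Y)$; unwinding the motivic structure of $X$ as a projective homogeneous variety under the orthogonal/symplectic/unitary group of $(A,\sigma)$ then yields a zero-cycle of odd degree on $X$, and hence a closed point of odd degree. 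Its residue field is the required odd-degree extension $L/F$ with $\sigma_L$ isotropic, contradicting the hypothesis.

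The principal obstacle lies in the motivic input. The orthogonal case depends on Karpenko's delicate Chow-theoretic incompressibility result proved via Steenrod operations on mod $2$ Chow groups; the symplectic case can be reduced to the orthogonal one by Tignol's iterated Laurent-series trick (precisely the device used in the appendix to Theorem~\ref{division.thm} to pass from orthogonal to symplectic division algebras); and the unitary case, treated by Karpenko--Zhykhovich, requires a genuinely separate treatment because the Weil restriction places $Y$ outside the direct scope of the orthogonal framework, forcing one to redo the upper-motive analysis over the quadratic extension $K/F$. Formulating the comparison $U(X)\cong U(Y)$ uniformly across the three types, and verifying that the extracted odd-degree closed point corresponds to genuine isotropy of $\sigma_L$ rather than to a weaker motivic statement, is the technically most intricate part of the program.
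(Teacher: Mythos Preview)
The paper does not prove Theorem~\ref{karmainthm}. It is a survey article, and this result is stated with attribution to \cite{Karpisoorth}, its appendix, \cite{Karpisosym}, and \cite{KZ}; the only comment offered on the proof is the sentence ``Theorem~\ref{karmainthm} is a very deep result whose proof is related to the incompressibility of some projective homogeneous varieties.'' There is therefore no proof in the paper to compare your proposal against.

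That said, your sketch is broadly aligned with the strategy of the cited references: the isotropy variety $X$, Chow motives with $\mathbb{F}_2$-coefficients, upper motives, and incompressibility of (Weil transfers of) generalized Severi--Brauer varieties are indeed the central tools, and Tignol's Laurent-series reduction from symplectic to orthogonal is exactly what happens in the appendix to \cite{Karpisoorth}. However, as written your outline conflates several distinct steps. The assertion that a rational point on $X$ over $F(Y)$ forces $U(X)$ to be a summand of $M(Y;\mathbb{F}_2)$, and that combining this with incompressibility of $Y$ yields $U(X)\cong U(Y)$, is not the actual logical flow; the argument instead proceeds by comparing canonical $2$-dimensions and exploiting the classification of upper motives of projective homogeneous varieties under inner-type groups, and one does not in general conclude $U(X)\cong U(Y)$. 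The passage from the motivic statement to the existence of an odd-degree closed point on $X$ also requires the specific structure of the varieties involved (in particular, the description of $X$ as a component of a variety of flags of isotropic subspaces together with a Severi--Brauer-type factor), not merely ``unwinding the motivic structure.'' So while the ingredients you name are the right ones, the sketch as it stands is a list of keywords rather than a proof outline, and the claimed identification $U(X)\cong U(Y)$ is not correct as stated.
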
 

Theorem~\ref{karmainthm} is a very deep result whose proof is related to the incompressibility of some projective homogeneous varieties. Among its interesting consequences is the following extension of Theorem~\ref{division.thm} for unitary involutions, {which actually applies to division algebras of arbitrary $2$-power exponent:
\begin{corollary}\cite[Thm. 1.4]{Karphypuni}\footnote{In fact,~\cite[Thm. 1.4]{Karphypuni} is more general than the consequence we point out here, and was proven before Theorem~\ref{karmainthm}, see~\ref{index.cor} below. The corollary can also be deduced from the earlier result of N. Karpenko~\cite{Karpincomp}.}\label{divgen.thm}
Let $D$ be a division algebra with $2$-power exponent. Any anisotropic unitary involution on $D$ remains anisotropic over $\cF_{D,u}$. 
\end{corollary}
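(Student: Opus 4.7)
The plan is to derive the corollary from Theorem~\ref{karmainthm} combined with the observation that, in the $2$-primary setting, odd-degree extensions of the base field preserve the division property of $D$. I argue by contraposition: suppose $\theta_{\cF_{D,u}}$ is isotropic. By Theorem~\ref{karmainthm}, there exists a finite odd-degree extension $L/F$ such that $\theta_L$ is isotropic on $D_L := D\otimes_F L$.

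To reach a contradiction, I would show that $D_L$ remains a division algebra, which is incompatible with isotropy of any involution on it: indeed, any nonzero isotropic right ideal of a division algebra would have to equal the whole algebra, but then $\theta_L(1)\cdot 1 = 1\neq 0$. Since $K/F$ is quadratic and $[L:F]$ is odd, the tensor product $K_L := K\otimes_F L$ is a field, quadratic over $L$ and of odd degree $[L:F]$ over $K$, and $D_L \simeq D\otimes_K K_L$ as $K_L$-algebras.

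The key point is then the index equality $\ind(D\otimes_K K_L)=\ind(D)$. Since $D$ has $2$-power exponent and the exponent and index of any central simple algebra share the same prime factors, $D$ has $2$-power index. Any minimal splitting field $M/K_L$ of $D\otimes_K K_L$ is in particular a splitting field of $D$ over $K$, so $\ind(D)$ divides $[M:K]=\ind(D\otimes_K K_L)\cdot[K_L:K]$. Because $[K_L:K]=[L:F]$ is odd and $\ind(D)$ is a power of $2$, this forces $\ind(D)\mid\ind(D\otimes_K K_L)$, and together with the trivial reverse divisibility one obtains $\ind(D_L)=\ind(D)=\deg(D)$, so $D_L$ is a division algebra, contradicting the isotropy of $\theta_L$.

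All of the substance of the proof is concentrated in Theorem~\ref{karmainthm}, which is the main obstacle but is available to us; granted that deep result, the reduction above uses only elementary index theory for central simple algebras of $2$-primary exponent, together with the basic fact that a division algebra admits no isotropic involution.
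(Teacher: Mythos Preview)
Your proof is correct and follows the same route as the paper: apply Theorem~\ref{karmainthm} to produce an odd-degree extension $L/F$ over which $\theta$ becomes isotropic, then observe that the $2$-power exponent hypothesis forces $D_L$ to remain division, contradicting isotropy. You have simply spelled out in detail the index computation that the paper leaves implicit in the phrase ``the hypothesis on the exponent of $D$ guarantees that $D$ remains division over $L$''.
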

\begin{proof}
By Theorem~\ref{karmainthm}, isotropy over $\cF_{D,u}$ would imply isotropy over some odd degree extension $L$ of the base field. But this is impossible, since the hypothesis on the exponent of $D$ guarantees that $D$ remains division over $L$.
\end{proof}

A further consequence of Theorem~\ref{karmainthm} is the following:
\begin{corollary}
\label{excellence.prop}
{Let $(A,\sigma)$ be a central simple algebra over $F$ with anisotropic involution of type $t$.
Fix an involution $\theta$ of type $t$ on the underlying division algebra $D$ and let $h$ over $(D,\theta)$ be a hermitian form associated to $\sigma$.}
The following assertions are equivalent: 
\begin{enumerate}
\item[(i)] The extension $\cF_{D,t}/F$ is excellent for the hermitian form $h$;
\item[(ii)] The involution $\sigma$ remains anisotropic over $\cF_{D,t}$;
\item[(iii)] The involution $\sigma$ remains anisotropic over any odd degree field extension $L/F$;
\end{enumerate}
\end{corollary}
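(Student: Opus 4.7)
The plan is to establish the cycle (ii)$\Rightarrow$(i), (i)$\Rightarrow$(ii), together with (ii)$\Leftrightarrow$(iii). The two easy directions will be (ii)$\Rightarrow$(i), which is trivial, and (ii)$\Rightarrow$(iii), which follows immediately from Lemma~\ref{as.lem}. The deeper implications will be (iii)$\Rightarrow$(ii) and (i)$\Rightarrow$(ii), which rest respectively on Theorem~\ref{karmainthm} (Karpenko--Tignol--Karpenko--Zhykhovich) and Theorem~\ref{Kh} (Karpenko's hyperbolicity theorem), following the template of the Parimala--Sridharan--Suresh argument recalled in Corollary~\ref{PSSgeneric}.

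For (ii)$\Rightarrow$(i), I would simply take $h'=h$: if $h_{\cF_{D,t}}$ is anisotropic, then it equals its own anisotropic part and excellence holds trivially. For (ii)$\Rightarrow$(iii), Lemma~\ref{as.lem} applies verbatim with $\cF_{D,t}$ in place of $\cF_{A,t}$: any odd-degree compositum $\cF_{D,t}\cdot L$ is an odd-degree extension of $\cF_{D,t}$, and over this latter field the extended involution is determined by a quadratic form (or by a hermitian form controlled by its trace form, in the remaining cases), so Artin--Springer propagates anisotropy. For (iii)$\Rightarrow$(ii), I would argue by contraposition: if $\sigma_{\cF_{D,t}}$ were isotropic, Theorem~\ref{karmainthm} would supply an odd-degree extension of $F$ over which $\sigma$ is isotropic, violating (iii).

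The heart of the argument is (i)$\Rightarrow$(ii), which I would prove by adapting the PSS strategy. Assuming (i) and, for contradiction, that $h_{\cF_{D,t}}$ is isotropic, excellence provides a hermitian form $h'$ over $(D,\theta)$ whose extension to $\cF_{D,t}$ is the anisotropic part of $h_{\cF_{D,t}}$. I may assume $h'$ is anisotropic over $F$, since any hyperbolic summand of $h'$ would persist over $\cF_{D,t}$ and contradict anisotropy of $h'_{\cF_{D,t}}$; necessarily $\dim h'<\dim h$. Then $h\perp(-h')$ is hyperbolic over $\cF_{D,t}$, and Theorem~\ref{Kh}, applied to the adjoint involution of $h\perp(-h')$, descends this hyperbolicity to $F$. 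Hence $h$ and $h'$ are Witt-equivalent over $F$; since both are anisotropic, this forces $h\cong h'$, contradicting $\dim h>\dim h'$.

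The main obstacle will be the subtle interplay between the generic function fields $\cF_{A,t}$ and $\cF_{D,t}$: Theorems~\ref{Kh} and~\ref{karmainthm} are phrased in terms of $\cF_{A,t}$, while the statement of the corollary is in terms of $\cF_{D,t}$. To be rigorous, one should either invoke the hermitian-form reformulations of these theorems (for which the natural generic field is the one attached to the underlying division algebra $D$, common to the whole Brauer class), or verify directly that the two fields are interchangeable for questions of anisotropy and hyperbolicity of hermitian forms over $(D,\theta)$. Either route should work, but this is the point where the most careful bookkeeping will be needed.
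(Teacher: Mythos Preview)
Your proposal is correct and follows essentially the same route as the paper: the paper also derives (ii)$\Leftrightarrow$(iii) from Lemma~\ref{as.lem} and Theorem~\ref{karmainthm}, notes that (ii)$\Rightarrow$(i) is clear, and deduces (i)$\Rightarrow$(ii) from Theorem~\ref{Kh} ``as in the proof of Parimala--Sridharan--Suresh's result Theorem~\ref{PSSgeneric}.'' Your flagged obstacle---the distinction between $\cF_{A,t}$ and $\cF_{D,t}$---is a legitimate bookkeeping point that the paper leaves implicit; it is resolved by the stable birationality of the relevant varieties for Brauer-equivalent algebras, so that anisotropy and hyperbolicity over one function field are equivalent to the same over the other.
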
 
The equivalence between conditions (ii) and (iii) is given by Lemma~\ref{as.lem} and Theorem~\ref{karmainthm}. 
Condition (ii) clearly implies condition (i), and conversely, condition (i) combined with Theorem~\ref{Kh} implies condition (ii), as in the proof of Parimala-Sridharan-Suresh's {result Theorem}~\ref{PSSgeneric}. 

{We conclude this section, by pointing out a third} interesting consequence of {Theorem}~\ref{karmainthm} which is further evidence in support of an affirmative answer to question (i$'$). {The statement of the result requires the notion of the Witt index of an involution}. We define the \emph{Witt-index} of an involution $\sigma$ to be the reduced dimension of a maximal isotropic ideal of the underlying algebra $A$.\footnote{The Witt index of $\sigma$ is the maximal element of the index of $(A,\sigma)$ as defined in~\cite[\S 6.A]{KMRT}.} If $\sigma$ is adjoint to a hermitian form $h$, then the Witt indices of $\sigma$ and $h$ {satisfy the equation} $i_W(\sigma)=i_W(h)\ind(A)$, where $\ind(A)$ is the Schur index of $A$ \cite[\S 6.A]{KMRT}. 
\begin{corollary} 
\label{index.cor}
{Let $(A,\sigma)$ be an algebra with involution of type $t$ over $F$. Then, there exists an odd degree field extension $L/F$ such that $i_W(\sigma_L)=i_W(\sigma_{\cF_{A,t}})$. If in addition $A$ has $2$-power exponent, then} $i_W(\sigma_{\cF_{A,t}})$ is a multiple of the Schur index of $A$.\footnote{For an algebra with orthogonal or unitary involution $(A,\sigma)$ over $F$, that the Witt index of $\sigma_{\cF_{A,o}}$ (resp. $\sigma_{\cF_{A,u}}$) is a multiple of the Schur index of $A$ is due to Karpenko~\cite[Thm 3.3]{Karpisopair},~\cite[Thm. 1.4]{Karphypuni}. This result preceded {Theorem}~\ref{karmainthm}, though as we have discussed here it can be deduced as a consequence of {Theorem} \ref{karmainthm}. The argument in \cite{Karpisopair} also applies to a quadratic pair over a base field of characteristic $2$~\cite[\S 5.B]{KMRT}.}
\end{corollary}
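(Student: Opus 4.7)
Set $i := i_W(\sigma_{\cF_{A,t}})$. For the first assertion, I would construct $L/F$ by iteratively applying Theorem~\ref{karmainthm} to the anisotropic kernel of $\sigma$. If $i_W(\sigma) = i$, choose $L = F$. Otherwise, let $(A_0,\sigma_0)$ be the anisotropic kernel of $(A,\sigma)$. Since $A_0$ is Brauer-equivalent to $A$, the function fields $\cF_{A,t}$ and $\cF_{A_0,t}$ are stably birational, so that Witt indices of hermitian forms over the common underlying division algebra with involution $(D,\theta)$ agree over them (via invariance under purely transcendental extensions). Writing $\sigma=\ad_h$ with $h = h_0 \perp i_W(h)\cdot\mathbb{H}$, the relation $i_W(\sigma) = i_W(h)\ind(A) < i = i_W(h_{\cF_{A,t}})$ forces $i_W(h_{0,\cF_{A,t}}) > 0$, hence also $i_W(h_{0,\cF_{A_0,t}}) > 0$; in other words, $\sigma_{0,\cF_{A_0,t}}$ is isotropic. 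Theorem~\ref{karmainthm} applied to $(A_0,\sigma_0)$ then yields an odd degree extension $L_0/F$ over which $\sigma_{0,L_0}$, and hence $\sigma_{L_0}$, has strictly larger Witt index than $\sigma$.

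I then iterate the argument with $L_0$ in place of $F$. At each step the target $i$ is preserved: $\cF_{A_{L_n},t} = \cF_{A,t} \cdot L_n$ is an odd degree extension of $\cF_{A,t}$, and since $\sigma_{\cF_{A,t}}$ is governed by a quadratic form, the Artin-Springer theorem yields $i_W(\sigma_{\cF_{A_{L_n},t}}) = i$. Likewise $i_W(\sigma_{L_n}) \leq i_W(\sigma_{\cF_{A_{L_n},t}}) = i$ by monotonicity of the Witt index under scalar extension. Hence $\bigl(i_W(\sigma_{L_n})\bigr)_n$ is a strictly increasing sequence of integers bounded by $i$, so it terminates at $i$ in finitely many steps, producing the desired $L$.

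For the second assertion, assume $A$ has $2$-power exponent. Then $\ind(A)$ is also a $2$-power. By the first part, choose an odd degree extension $L/F$ with $i_W(\sigma_L) = i_W(\sigma_{\cF_{A,t}})$. Since $[L:F]$ is coprime to $\ind(A)$, the index is preserved under scalar extension: $\ind(A_L) = \ind(A)$. Writing $\sigma_L = \ad_{h_L}$ over the underlying division algebra, we have $i_W(\sigma_L) = i_W(h_L)\ind(A_L) = i_W(h_L)\ind(A)$, which gives the claimed divisibility.

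\textbf{Main obstacle.} The principal technical point is the passage from $\cF_{A,t}$ to $\cF_{A_0,t}$ when the anisotropic kernel $A_0$ has strictly smaller degree than $A$; this requires the stable birational equivalence of the generic index reduction varieties attached to Brauer-equivalent algebras, so that isotropy of the induced hermitian form over one of these fields transfers to the other. Once this is granted, the rest is a clean induction combined with the classical fact that odd-degree scalar extensions preserve $2$-power indices.
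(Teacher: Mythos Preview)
Your argument is correct and follows essentially the same route as the paper's proof: establish the upper bound $i_W(\sigma_L)\leq i_W(\sigma_{\cF_{A,t}})$ for every odd-degree $L/F$ via Artin--Springer on the compositum, then iterate Theorem~\ref{karmainthm} on the anisotropic part to push the Witt index up until equality is reached; the second assertion is handled identically. The one point you make explicit that the paper leaves implicit is the passage from $\cF_{A_L,t}$ to $\cF_{A_0,t}$ when applying Theorem~\ref{karmainthm} to the anisotropic part $(A_0,\sigma_0)$; your observation that the relevant varieties for Brauer-equivalent algebras are stably birational (so isotropy of a hermitian form over one function field transfers to the other) is exactly what is needed there, and it is good that you flagged it.
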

\begin{proof}
{Let $L$ be an odd degree extension of the base field $F$, and consider the compositum $\cL=L\cF_{A,t}$. Since the Witt index can only increase under a field extension, we have $i_W(\sigma_{L})\leq i_W(\sigma_{\cL})$. On the other hand, as in the proof of Lemma~\ref{as.lem}, the Artin-Springer theorem gives that $i_W(\sigma_{\cF_{A,t}})=i_W(\sigma_\cL)$. Therefore, $i_W(\sigma_L)\leq i_W(\sigma_{\cF_{A,t}})$.  If the inequality is strict, one may apply Theorem~\ref{karmainthm} to the anisotropic part of $(A,\sigma)_L$ to produce an odd degree extension $L'$ of $L$ such that $i_W(\sigma_{L'})>i_W(\sigma_L)$, and an induction argument completes the proof. The last assertion follows immediately since the Witt index of $\sigma_L$ is a multiple of $\ind(A_L)$ and our hypothesis on the exponent of $A$ guarantees that $\ind(A)=\ind(A_L)$.}
\end{proof}

\section{Isomorphism of involutions}
\label{isom.sec}

{While the main result in the previous section was that anisotropy is preserved under odd-degree extensions if and only if it is preserved under extension to $\cF_{A,t}$, we shall observe here that though non-isomorphic involutions remain non-isomorphic over odd degree field extensions, they may become isomorphic over $\cF_{A,t}$.} {Recall that Bayer and Lenstra's Corollary~\ref{bl.cor} gave that non-isomorphic hermitian forms remain non-isomorphic over odd-degree field extensions.} However, as we noted above, since isomorphic involutions have associated hermitian forms which are similar rather than isomorphic, this does not immediately answer question (iii). In 2000,  Lewis~\cite[Proposition 10]{Lewis} gave an affirmative answer to {question} (iii) for involutions of the first kind. Barquero-Salavert~\cite[Theorem 3.2]{Barquero} gave the proof in the unitary case in 2006 (see also \cite[Proposition 7.10, 7.20]{Blackdiss} and \cite[Theorem 4.8]{Black1}). The formal statement is as follows:
\begin{theorem}\cite[Proposition 10]{Lewis}, \cite[Theorem 3.2]{Barquero}. 
\label{isom.thm}{Let $A$ be a central simple algebra over $F$ and let $\sigma$ and $\sigma^\prime$ be two involutions on $A$ over $F$. 
Assume there is an odd-degree extension $L$ of $F$ such that $\sigma_L$ is isomorphic to $\sigma^\prime_L$. If the involutions are of unitary type, we assume in addition that the degree of $L/F$ is coprime to the index of $A$. Then $\sigma$ and $\sigma^\prime$ are isomorphic.}
\end{theorem}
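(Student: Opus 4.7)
The plan is to translate the isomorphism question for involutions into a similarity question for hermitian forms, and then to apply the hermitian form analogue of Scharlau's norm principle together with Bayer--Lenstra's Corollary~\ref{bl.cor}. I start by fixing a division algebra $D$ Brauer equivalent to $A$ and an involution $\theta$ on $D$ of the same type as $\sigma$; in the unitary case, the hypothesis $\gcd([L:F],\ind(A))=1$ guarantees that $D_L$ remains a division algebra, so the hermitian-form description of $(A,\sigma)$ is preserved under scalar extension to $L$. Writing $(A,\sigma)\simeq(\End_D V,\ad_h)$ and $(A,\sigma')\simeq(\End_D V,\ad_{h'})$ for hermitian forms $h,h'$ on $V$ over $(D,\theta)$, the statement $\sigma\simeq\sigma'$ is equivalent to the existence of a scalar $\mu\in F^\times$ with $h\simeq\mu h'$, and the hypothesis $\sigma_L\simeq\sigma'_L$ yields some $\mu_L\in L^\times$ with $h_L\simeq\mu_L h'_L$.

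A direct computation, exploiting that $\mu_L^2$ is a square and hence a similarity factor of $h'_L$ so that $\mu_L h_L\simeq\mu_L^2 h'_L\simeq h'_L$, shows that $-\mu_L(h\perp -h')_L\simeq -h'_L\perp h_L\simeq(h\perp -h')_L$; i.e.\ $-\mu_L$ is a similarity factor of $(h\perp -h')_L$. Applying the Scharlau--Knebusch norm principle for similarity factors (extended from the classical quadratic form statement recalled in the introduction to hermitian forms via the Scharlau transfer machinery of Bayer and Lenstra, as in the proof of Theorem~\ref{BL}), one deduces that $-N_{L/F}(\mu_L)$ is a similarity factor of $h\perp -h'$ over $F$; since $[L:F]$ is odd, this reads $-\nu\in G(h\perp -h')$ for $\nu:=N_{L/F}(\mu_L)\in F^\times$. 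Combining this with the Witt-group identity $T\otimes[h]=T_{\mu_L}\otimes[h']$ in $W(F,(D,\theta))$ obtained by pushing down the hyperbolic relation $h_L\perp -\mu_L h'_L\sim 0$ via a suitable Scharlau transfer $s_*$ (where $T=s_*(\langle 1\rangle_L)$ and $T_{\mu_L}=s_*(\langle\mu_L\rangle_L)$ are quadratic forms over $F$ of odd rank $[L:F]$), and choosing $s$ so that $T$ represents $1$, one extracts via Witt cancellation a scalar $\mu\in F^\times$ with $h_L\simeq\mu h'_L$ over $L$. Corollary~\ref{bl.cor} applied to $h$ and $\mu h'$ then descends this to $h\simeq\mu h'$ over $F$, hence to $\sigma\simeq\sigma'$.

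The main technical obstacle is this final extraction step: converting the combination of the Witt-group identity and the $F^\times$-similarity factor of $h\perp -h'$ into a genuine similarity $h\simeq\mu h'$ over $F$, rather than only a Witt-equivalent relation. In the unitary case, the coprimality hypothesis $\gcd([L:F],\ind(A))=1$ plays a double role: it preserves the hermitian-form setup under extension to $L$, and it ensures that $N_{L/F}(\mu_L)$ is compatible with the involution on the centre $K$, so that the extracted $\mu$ lies in $F^\times$.
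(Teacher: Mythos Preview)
Your overall strategy matches the paper's sketch: translate to hermitian forms, use the Scharlau transfer and the hermitian norm principle to descend the similitude factor, and then invoke Corollary~\ref{bl.cor}. That is exactly what the paper says the proof does.

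However, you have not actually carried out the proof, and you say so yourself: ``The main technical obstacle is this final extraction step.'' This is not a minor technicality---it is the entire content of the theorem beyond Bayer--Lenstra. Your two ingredients, the fact that $-\nu\in G(h\perp-h')$ and the Witt-group identity $T\otimes[h]=T_{\mu_L}\otimes[h']$, are both correct, but you never show how to combine them to produce $\mu\in F^\times$ with $h_L\simeq\mu h'_L$. The sentence ``one extracts via Witt cancellation a scalar $\mu\in F^\times$'' is an assertion, not an argument; Witt cancellation lets you cancel isometric summands, not manufacture scalars. In fact the detour through $G(h\perp-h')$ is a red herring: knowing $-\nu(h\perp-h')\simeq h\perp-h'$ only gives $\langle 1,\nu\rangle\otimes h\simeq\langle 1,\nu\rangle\otimes h'$ after rearrangement, which is an even-rank statement and does not by itself force $h$ and $h'$ to be similar.

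The clean route (which is what Lewis and Barquero actually do, and what the paper's one-line sketch points to) stays with the transfer identity. Choosing $s$ with $s(1)=1$, the projection formula gives an isometry $s_*(\langle 1\rangle_L)\otimes h\simeq s_*(\langle\mu_L\rangle_L)\otimes h'$ of forms of the same dimension. One then uses the norm principle directly on the similitude---not on $h\perp-h'$---to show that $h$ and $\nu h'$ become isometric over $L$, where $\nu=N_{L/F}(\mu_L)$, after which Corollary~\ref{bl.cor} finishes. You should consult~\cite[Proposition~10]{Lewis} and~\cite[Theorem~3.2]{Barquero} for the precise manipulation.

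Finally, your remark about the unitary case is off target. The scalar $\mu_L$ already lies in the fixed field $L$ (similitude factors of hermitian forms over $(D_L,\theta_L)$ are in $L^\times$, not in $(K\otimes_F L)^\times$), so $N_{L/F}(\mu_L)\in F^\times$ automatically. The coprimality hypothesis $\gcd([L:F],\ind(A))=1$ is there to keep $D_L$ division, so that the hermitian-form description of $\sigma_L$ is over the same $(D,\theta)$, and to make the norm principle for hermitian forms available; it has nothing to do with compatibility with the involution on $K$.
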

The idea behind the proof is to use the extension of Scharlau's norm principle~\cite[Chap. 2 Thm 8.6]{Scharlau} to hermitian forms to descend the similarity factors of the hermitian forms associated to $\sigma$ and $\sigma^\prime$ over $L$ to the base field $F$. Since there is no norm map between $\cF_{A,t}$ and the base field $F$, there is no hope of using this strategy to produce an isomorphism over $F$ from a given isomorphism over $\cF_{A,t}$. Indeed, explicit examples of non-isomorphic orthogonal involutions that become isomorphic over $\cF_{A,o}$ are given by the second-named author and Tignol in~\cite{QT}. The construction can be sketched as follows:

The starting point is a field $F$ and a degree $8$ and exponent $2$ central simple algebra $E$ over $F$ that contains a triquadratic field $F(\sqrt a,\sqrt b,\sqrt c)$, but does not have any decomposition as a tensor product of quaternions $H_1\otimes H_2\otimes H_3$ such that $\sqrt a\in H_1$, $\sqrt b\in H_2$ and $\sqrt c\in H_3$. 
For instance, one may take for $E$ any indecomposable degree $8$ and exponent $2$ central simple algebra. Since the centralizer in $E$ of $K=F(\sqrt a)$ is a biquaternion algebra containing $F(\sqrt b,\sqrt c)$, {by Albert's theorem, it does decompose as $(b,r)\otimes (c,s)$} for some $r,s\in K^\times$. The example is defined in terms of these elements {$a$, $b$, $c\in F^\times$ and $s\in F(\sqrt a)^\times$. 
Pick two variables $x,y$ over $F$ and consider the algebra $A=(a,x)\otimes (b,y)\otimes (c,1)$ over $F(x,y)$. The element $s\in F(\sqrt a)^\times$ can be viewed as an element of $(a,x)\supset F(\sqrt a)$. Endow the quaternion algebras $(a,x)$, $(b,y)$ and $(c,1)$ with the unique orthogonal involutions $\rho$, $\tau$ and $\theta$ with discriminant $x$, $y$ and $c$ respectively (see~\cite[(7.4)]{KMRT}). Using cohomological invariants, it is proven in~\cite[Example 4.2]{QT} that the orthogonal involutions $\sigma=\rho\otimes\tau\otimes \theta$ and $\sigma'=({\mathrm{ int}}(s)\circ \rho)\otimes \tau\otimes \theta$ are non isomorphic, and become isomorphic {over $\cF_{A,o}$}. This example was inspired by Hoffmann's example~\cite[\S 4]{Hoffmann} of non-similar $8$-dimensional quadratic forms that are \emph{half-neighbors} {where two {$8$-dimensional} quadratic forms $q$ and $q^\prime$ over a field $F$ are said to be \emph{half-neighbors} if there exists {scalars $\lambda,\mu \in F$ such that $\qform{\lambda} q \oplus \qform{\mu} q^\prime$ is a $4$-fold Pfister form}. The relation between the two constructions is given by triality~\cite[\S 42]{KMRT}, and Sivatski's criterion for isomorphism after generic splitting of orthogonal involutions on degree $8$ algebras~\cite[Proposition 4]{Sivatski}. 

{In addition to being of independent interest, these results on isomorphism give new insights on isotropy. In the next section we use Lewis and Barquero-Salavert's Theorem \ref{isom.thm} to give an affirmative answer to question (i) for some algebras of low degree.}

\section{Isotropy of involutions on some algebras of low degree}
\label{lowdeg.sec}

{In~\cite[pg 240]{ABGV} Auel, Brussel, Garibaldi and Vishne note that it is unknown whether anisotropy is preserved under odd degree extensions for orthogonal involutions on algebras of degree $12$. We prove that this result holds and {thus} give a second example of a positive answer to question (i$'$) that does not reduce to hyperbolicity.}

\begin{theorem}
\label{deg12.prop}
Let $(A,\sigma)$ be a degree $12$ algebra with orthogonal involution over $F$. Let $L$ be a finite field extension of $F$ of odd degree. If $\sigma_L$ is isotropic, then $\sigma$ is isotropic. 
\end{theorem}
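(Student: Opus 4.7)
\emph{The plan.} Since $\sigma$ is orthogonal, $A$ has exponent dividing $2$, so $\ind(A)$ is a power of $2$ dividing $\deg(A)=12$; namely, $\ind(A)\in\{1,2,4\}$. The cases $\ind(A)=1$ and $\ind(A)=2$ are handled by Lemma~\ref{as.lem} and Corollary~\ref{PSSgeneric} respectively, so the substantive case is $\ind(A)=4$. In that case I would write $A\cong \End_D(V)$ with $D$ a biquaternion division algebra, $\theta$ an orthogonal involution on $D$, and $\sigma=\ad_h$ for a rank-$3$ hermitian form $h=\qform{a_1,a_2,a_3}$ over $(D,\theta)$. If $\sigma_L$ is hyperbolic, Theorem~\ref{BL} already forces $\sigma$ to be hyperbolic (hence isotropic) over $F$, so assume $\sigma_L$ is isotropic but not hyperbolic. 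Then $h_L\cong \mathbb{H}\perp \qform{d}$ with $\qform{d}$ anisotropic, for some $d\in D_L^\times$ fixed by $\theta_L$.

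The strategy is to construct an orthogonal involution $\sigma_0$ on $A$, defined over $F$, that is manifestly isotropic and becomes isomorphic to $\sigma$ over $L$. Theorem~\ref{isom.thm} of Lewis and Barquero-Salavert then descends this isomorphism to $F$, giving $\sigma\cong\sigma_0$, hence isotropy of $\sigma$. I take $\sigma_0:=\ad_{\mathbb{H}\perp\qform{e}}$ for some $e\in D^\times$ with $\theta(e)=e$ to be chosen; such $\sigma_0$ is plainly isotropic. The condition $\sigma_L\cong(\sigma_0)_L$ amounts to a similarity $h_L\sim(\mathbb{H}\perp\qform{e})_L$ of hermitian forms over $(D_L,\theta_L)$, which, after Witt cancellation of the hyperbolic plane, reduces to a similarity $\qform{d}\sim\qform{e}_L$ of rank-$1$ forms, or equivalently to an isomorphism of the orthogonal involutions $\Int(d^{-1})\circ\theta_L$ and $\Int(e^{-1})\circ\theta_L$ on $D_L$.

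To produce $e$ I would invoke the classification of orthogonal involutions on a degree-$4$ central simple algebra: such an involution is determined up to isomorphism by its discriminant in $F^\times/F^{\times 2}$ and by its (quaternionic) Clifford class in $\Br(F(\sqrt{\disc}))$, subject to the fundamental relation $\cor(C)=[D]$ in $\Br(F)$, and any pair satisfying this compatibility is realized. The invariants of $\Int(d^{-1})\circ\theta_L$ can be read off from the Witt decomposition $h_L=\mathbb{H}\perp\qform{d}$: the hyperbolic plane contributes trivially to both the discriminant and the Clifford class, so these invariants of $\qform{d}$ are the base changes to $L$ (respectively to $L(\sqrt{\disc})$) of natural discriminant and Clifford invariants attached to $h$ over $F$. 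Since $[L:F]$ is odd, the restriction maps $F^\times/F^{\times 2}\to L^\times/L^{\times 2}$ and $\Br_2(F(\sqrt{\disc}))\to \Br_2(L(\sqrt{\disc}))$ are injective, so the $F$-invariants are uniquely determined, and the corestriction compatibility descends by commutation of restriction with corestriction. Realizing these invariants by an orthogonal involution $\Int(e^{-1})\circ\theta$ on $D$ produces the required $e$.

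The main obstacle is this last step: carefully identifying the discriminant and Clifford invariants of the anisotropic Witt kernel $\qform{d}$ in terms of well-defined $F$-invariants of $h$, and then checking that the resulting descended pair satisfies the realizability criterion on $D$. This is where the specific biquaternion structure of $D$ enters essentially, both through the classification of orthogonal involutions on degree-$4$ algebras and through the fundamental corestriction relation between Clifford algebras and $[D]$; once these cohomological inputs are in place, the odd-degree descent of $2$-torsion invariants combined with a direct application of Theorem~\ref{isom.thm} finishes the proof.
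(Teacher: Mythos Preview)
Your proposal is correct and follows essentially the same route as the paper: reduce to $\ind(A)=4$, identify the degree-$4$ anisotropic part over $L$ by its discriminant $\delta$ and its Clifford class, descend both invariants to $F$ (the paper does this via \cite[Prop.~3]{DLT} and the fundamental relation \cite[(9.14)]{KMRT}, while you argue by odd-degree injectivity on square classes and on $\Br_2$), realize them over $F$ as $N_{F'/F}(Q,\bar{\ })$ on $D$, and finish with Lewis's Theorem~\ref{isom.thm}. The only cosmetic difference is that the paper packages the ``classification of orthogonal involutions on degree-$4$ algebras by discriminant and Clifford class'' directly as the exceptional isomorphism $(D,\tilde\sigma)\simeq N_{F'/F}(Q,\bar{\ })$ of \cite[(15.7)]{KMRT}, which makes the realizability step and the compatibility $N_{F'/F}(Q)\simeq D$ immediate; your formulation is equivalent but you should make explicit that the Clifford class of $\sigma$ over $F'$ has index at most $2$ (this follows since its restriction to $L'$ is the quaternion $\tilde Q$ and $[L':F']$ is odd).
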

\begin{proof}
Since the cases where $\ind(A)=1$ or $\ind(A)=2$ were discussed above, we need only consider the case where $\ind(A)=4$. 
So, we may assume $A=M_3(D)$, for some division biquaternion algebra over $F$. Let us denote by $\delta\in\sq F$ the discriminant of $\sigma$, and let $F'=F[X]/(X^2-\delta)$ be the corresponding quadratic \'etale extension. 

Assume $\sigma$ is isotropic over $L$. Since $D$ remains division over $L$, the anisotropic part of $(A,\sigma)_L$ has degree $4$; therefore, it is isomorphic to $(D_L,\tilde\sigma)$, for some orthogonal involution $\tilde\sigma$ of $D_L$. 
The involution $\tilde\sigma$ being Witt-equivalent to $\sigma_L$, it has discriminant $\delta\in\sq F\subset\sq L$ and by the exceptional isomorphism~\cite[(15.7)]{KMRT}, 
we have 
$$(D_L,\tilde\sigma)=N_{L'/L}(\tilde Q,\bar{\ }),$$
where $L'=L[X]/(X^2-\delta)$, and the quaternion algebra $\tilde Q$ over $L'$ is the Clifford algebra of $\tilde\sigma$. 
Hence, since $L'$ has odd degree over $F'$, the Clifford algebra of $\sigma$ itself is Brauer-equivalent to a quaternion algebra $Q$ over $F'$ such that $Q_{L'}$ is isomorphic either to $\tilde Q$ or to its conjugate (see~\cite[prop. 3]{DLT}). In both cases, we have 
$\bigl(N_{F'/F}(Q,\bar{\ })\bigr)_L\simeq N_{L'/L}(\tilde Q,\bar{\ })$. 

To conclude, we will use the so-called fundamental relation~\cite[(9.14)]{KMRT}, which shows that $N_{F'/F}(Q)$ is isomorphic to the division algebra $D$. Therefore, $A=M_3(D)$ has a unique isotropic orthogonal involution $\sigma_0$ with anisotropic part $N_{F'/F}(Q,\bar{\ })$. 
Moreover, the involutions $\sigma$ and $\sigma_0$ are isomorphic over $L$. 
Hence they are isomorphic over $F$ by Lewis~\cite[Proposition.~10]{Lewis} (see~\ref{isom.thm} above), and $\sigma$ is isotropic. 
\end{proof}

{One may proceed as in Tignol's appendix~\cite[Appendix]{Karpisoorth} to derive a positive answer to question (i$'$) for an algebra of degree 6 and exponent 2 with unitary involution as a consequence of Theorem~\ref{deg12.prop}. Indeed, Tignol's construction associates to a degree $m$ algebra with $K/F$ unitary involution $(A,\sigma)$ a degree $2m$ algebra with orthogonal involution $(\tilde A,\tilde \sigma)$ over the Laurent series field $F((x))$, and he proves the anisotropy property holds for $(\tilde A,\tilde \sigma)$ if it holds for $(A,\sigma)$. Alternatively, one can prove this result by using an argument similar to that in the proof of Theorem \ref{deg12.prop}. We give the details of the latter approach below.}

\begin{theorem} 
\label{deg6uni.prop}
Let $(A,\sigma)$ be an exponent $2$ and degree $6$ algebra with unitary involution over $F$. Let $L$ be a finite field extension of $F$ of odd degree. 
If $\sigma_L$ is isotropic, then $\sigma$ is isotropic. 
\end{theorem}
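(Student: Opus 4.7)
Following the strategy of Theorem~\ref{deg12.prop}, the plan is to construct an explicit isotropic unitary involution $\sigma_0$ on $A$ defined over $F$ that becomes isomorphic to $\sigma$ after base change to $L$, and then to descend this isomorphism via Theorem~\ref{isom.thm}. Since $A$ has exponent $2$, its index is $1$ or $2$. The case $\ind(A)=1$ follows from the Artin--Springer theorem applied to the trace form of the hermitian form associated to $\sigma$. So I would assume $\ind(A)=2$ and write $A=M_3(D)$ for a quaternion division algebra $D$ over $K$.

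Fix any unitary involution $\theta_0$ on $D$. By~\cite[(2.22)]{KMRT} there is a quaternion $F$-algebra $H_0$ such that $D\simeq H_0\otimes_F K$ and $\theta_0=\bar{\ }\otimes \iota$, where $\iota$ generates $\Gal(K/F)$. Writing $\sigma=\ad_h$ for a 3-dimensional hermitian form $h$ over $(D,\theta_0)$, the isotropy of $\sigma_L$ yields a Witt decomposition $h_L\simeq \mathbb{H}\perp \qform{a}$, so the anisotropic part of $(A_L,\sigma_L)$ is a degree 2 algebra with unitary involution $(D_L,\tilde\sigma)$. Applying~\cite[(2.22)]{KMRT} over $L$ then produces a uniquely determined quaternion $L$-algebra $\tilde H$ with $(D_L,\tilde\sigma)\simeq(\tilde H,\bar{\ })\otimes_L(K_L,\iota_L)$.

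The main step will be to descend $\tilde H$ to $F$. I would set $[H]:=\cor_{L/F}([\tilde H])\in\Br(F)$. Then $[H]$ has exponent dividing $2$, and since $[L:F]$ is odd the projection formula gives $\res_{L/F}([H])=[L:F]\cdot[\tilde H]=[\tilde H]$ in $\Br(L)$. Since $\ind(H)$ is a power of $2$ dividing $\ind(\tilde H)\cdot[L:F]=2\cdot\mathrm{odd}$, it is at most $2$, so $H$ is represented by a (possibly split) quaternion $F$-algebra with $H_L\simeq\tilde H$. Restricting further to $K_L$, the equalities $[H_K]_{K_L}=[H_L]_{K_L}=[\tilde H]_{K_L}=[D_L]=[D]_{K_L}$ together with the injectivity of restriction on $\Br(K)_2$ for the odd degree extension $K_L/K$ force $[H_K]=[D]$ in $\Br(K)$, hence $D\simeq H_K$ as $K$-algebras.

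It only remains to assemble the pieces: identifying $D= H\otimes_F K$, let $\theta=\bar{\ }\otimes \iota$ and let $\sigma_0=\ad_{\mathbb{H}\perp\qform{1}}$ be the isotropic unitary involution on $A$ whose anisotropic part is $(D,\theta)$. Then $\sigma_L$ and $\sigma_{0,L}$ are isotropic unitary involutions on $A_L$ whose anisotropic parts are both isomorphic to $(D_L,\theta_L)\simeq(D_L,\tilde\sigma)$, whence their underlying 3-dimensional hermitian forms are similar and $\sigma_L\simeq\sigma_{0,L}$. Since $[L:F]$ is coprime to $\ind(A)=2$, Theorem~\ref{isom.thm} produces an isomorphism $\sigma\simeq\sigma_0$ over $F$, from which the isotropy of $\sigma$ follows. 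The main obstacle in executing this plan is the descent of the quaternion algebra $\tilde H$ from $L$ to $F$ and the ensuing identification $D\simeq H_K$; once those are achieved the remainder parallels the degree 12 orthogonal argument.
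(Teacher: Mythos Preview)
Your overall strategy mirrors the paper's: identify the anisotropic part of $(A_L,\sigma_L)$ as $(D_L,\tilde\sigma)$, descend the quaternion algebra $\tilde H$ appearing in the Albert decomposition $(D_L,\tilde\sigma)\simeq(\tilde H,\bar{\ })\otimes_L(K_L,\iota_L)$ to a quaternion algebra over $F$, build an isotropic $\sigma_0$ from it, and apply Theorem~\ref{isom.thm}. The gap is in your descent step.

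You set $[H]:=\cor_{L/F}([\tilde H])$ and then invoke ``the projection formula'' to conclude $\res_{L/F}([H])=[L:F]\cdot[\tilde H]$. This is not valid: the identity $\cor_{L/F}\circ\res_{L/F}=[L:F]\cdot\id$ holds on $\Br(F)$, but $\res_{L/F}\circ\cor_{L/F}$ is governed by a Mackey-type double coset formula and is \emph{not} multiplication by $[L:F]$ in general. So there is no reason your $H$ satisfies $H_L\simeq\tilde H$, and the argument breaks down precisely at the point you flag as the ``main obstacle''. (The subsequent index bound ``$\ind(H)\mid\ind(\tilde H)\cdot[L:F]$'' is also unjustified as stated, though it becomes moot once the descent fails.)

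The paper avoids this by observing that $\tilde H$ is not an arbitrary quaternion $L$-algebra: it is the discriminant algebra of $(D_L,\tilde\sigma)$, hence Brauer-equivalent to the discriminant algebra of $(A_L,\sigma_L)$, which in turn is the extension to $L$ of the discriminant algebra ${\mathcal D}(A,\sigma)$ already defined over $F$ (see \cite[\S 10]{KMRT}). Since ${\mathcal D}(A,\sigma)$ has exponent at most $2$ and becomes of index $2$ over the odd-degree extension $L$, it is itself represented by a quaternion algebra $Q_0$ over $F$ with $(Q_0)_L\simeq\tilde H$; the fundamental relation \cite[(10.30)]{KMRT} then gives $(Q_0)_{K}\sim A$, i.e.\ $D\simeq(Q_0)_K$. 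In short, the descent you need is supplied by the functoriality of the discriminant algebra, not by corestriction.
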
 
\begin{proof}
Let $F'$ be the center of the algebra $A$ and let $L'=LF'$. Since the case of a split algebra was discussed in \ref{hypiso.sec} we may assume that $A$ is non-split and therefore that $A=M_3(D)$ for some quaternion division algebra $D$ over $F'$. The involution $\sigma_L$ is an $L'/L$ unitary involution on $A_L$ which is isotropic by assumption. Since $D$ remains division over $L'$, the anisotropic part of $(A,\sigma)_L$  has degree $2$ and therefore, it is isomorphic to $(D_{L'},\tau)$ for some $L'/L$-unitary involution $\tau$ on $D_{L'}$. By a result of Albert (see~\cite[(2.22)]{KMRT}), there exists a unique quaternion algebra $Q$ over $L$ such that $(D_{L'},\tau)=(Q,\bar{\ })\otimes (L',\bar{\ })$, where $\bar{\ }$ denotes the respective canonical involutions on the quaternion algebra $Q$ and the quadratic extension $L'$ over $L$. 
Now, as explained in~\cite[p. 129]{KMRT}, the quaternion algebra $Q$ is the discriminant algebra of $(D_{L'},\tau)$, hence it is Brauer-equivalent to the discriminant algebra of $(A,\sigma)_L$. Since $L/F$ has odd degree, it follows that the discriminant algebra of $(A,\sigma)$ is Brauer equivalent to a quaternion algebra $Q_0$ over $F$ such that $(Q_0)_L\simeq Q$. By~\cite[Prop. 10.30]{KMRT}, $(Q_0)_{F'}$ is Brauer-equivalent to $A$, so $A=M_3(Q_0)_{F'}$ admits a unique involution $\sigma_0$ Witt-equivalent to $(Q_0,\bar{\ })\otimes_F (F',\bar{\ })$. The involutions $\sigma$ and $\sigma_0$ are isomorphic over $L$, hence they are isomorphic over $F$ by~\cite[Theorem 3.2]{Barquero}, hence $\sigma$ is isotropic. 
\end{proof} 

\providecommand{\href}[2]{#2}

\bibliographystyle{amsplain}
\end{document}